\title{Analysis of injection operators in multigrid solvers for hybridized discontinuous Galerkin methods%
\thanks{
This work is supported by the Deutsche Forschungsgemeinschaft (DFG, German Research Foundation) under Germany's Excellence Strategy EXC 2181/1 - 390900948 (the Heidelberg STRUCTURES Excellence Cluster). P.~Lu has been supported by the Alexander von Humboldt Foundation.}}
\author{
Peipei Lu\thanks{Department of Mathematics Sciences, Soochow University, Suzhou, 215006, China,
\email{pplu@suda.edu.cn}}
\and
Andreas Rupp
\thanks{Interdisciplinary Center for Scientific Computing (IWR), Heidelberg University, Mathematikon, Im Neuenheimer Feld 205, 69120 Heidelberg, Germany, \email{andreas.rupp@fau.de},
\email{kanschat@uni-heidelberg.de}}
\and
Guido Kanschat\footnotemark[3]}
\tikzset{>=latex}
\newcommand{\elem}{\ensuremath{T}}
\newcommand{\mesh}{\ensuremath{\mathcal T}}
\newcommand{\faceSet}{\ensuremath{\mathcal F}}
\newcommand{\faceSetDir}{\ensuremath{\mathcal F^\textup D}}
\newcommand{\face}{\ensuremath{F}}
\newcommand{\skeletal}{\ensuremath{\Sigma}}
\newcommand{\skeletalSpace}{\ensuremath{M}}
\newcommand{\contThreeElementSpace}[1]{\ensuremath{V^\textup c_{#1,p+3}}}
\newcommand{\contFourElementSpace}[1]{\ensuremath{V^\textup c_{#1,p+4}}}
\newcommand{\linElementSpace}{\ensuremath{\overline V^\textup c}}
\newcommand{\discElementSpace}{\ensuremath{V}}
\newcommand{\polynomials}{\ensuremath{\mathcal P}}
\newcommand{\level}{\ensuremath{\ell}}
\newcommand{\iterMgOuter}{i}
\newcommand{\iterMgInner}{m}
\newcommand{\Div}{\nabla\!\cdot\!}
\newcommand{\extensionOp}{\ensuremath{\mathcal U^\textup c}}
\newcommand{\averagingOp}{\ensuremath{I^\textup{avg}}}
\newcommand{\linearInterpolation}{\overline{I}}
\newcommand{\injectionOp}{\ensuremath{I}}
\newcommand{\projectionOp}{\ensuremath{P}}
\newcommand{\projectionOrthogonalOP}{\ensuremath{\Pi}}
\newcommand{\skeletalProj}{\ensuremath{\Pi^\partial}}
\newcommand{\contLinProj}{\ensuremath{\overline \Pi^\textup c}}
\newcommand{\discProj}{\ensuremath{\Pi^\textup d}}
\newcommand{\liftingOp}{\ensuremath{S}}
\renewcommand{\vec}[1]{\ensuremath{\boldsymbol{#1}}}
\newcommand{\Nu}{\ensuremath{\vec \nu}}
\newcommand{\dx}{\ensuremath{\, \textup d x}}
\newcommand{\ds}{\ensuremath{\, \textup d \sigma}}
\newcommand{\localU}{\ensuremath{\mathcal U}}
\newcommand{\localQ}{\ensuremath{\vec{\mathcal Q}}}
\newcommand{\avg}[1]{\{\!\{ #1 \}\!\}}
\newcommand{\jump}[1]{{[\![ #1 ]\!]}}
\newcommand{\ureconstructed}{\overline{u}}
\begin{document}

\maketitle

\begin{abstract}
 Uniform convergence of the geometric multigrid V-cycle is proven for HDG methods with a new set of assumptions on the injection operators from coarser to finer meshes. The scheme involves standard smoothers and local solvers which are bounded, convergent, and consistent. Elliptic regularity is used in the proofs. The new assumptions admit injection operators local to a single coarse grid cell. Examples for admissible injection operators are given. The analysis applies to the hybridized local discontinuous Galerkin method, hybridized Raviart-Thomas, and hybridized Brezzi-Douglas-Marini mixed element methods. Numerical experiments are provided to confirm the theoretical results.
\end{abstract}

\begin{keywords}
Multigrid, injection operator, Poisson equation, unified analysis, hybridized finite elements, LDG, Raviart-Thomas, BDM.
\end{keywords}

\begin{AMS}
65F10, 65N30, 65N50
\end{AMS}

\section{Introduction}
%
While hybridizable discontinuous Galerkin (HDG) methods have become an active research area in the last decades, there are few results concerning the efficient solution of the resulting algebraic systems even for second order elliptic problems. Multigrid methods for HDG so far have relied either on continuous coarse spaces or injection operators with unnecessarily wide stencils. In this paper, we analyze fundamental assumptions on injection operators for the convergence of the V-cycle algorithm and we propose several such operators which are strictly local to a single coarse grid cell.


With multigrid methods for HDG, the difficulty of devising an ``injection operator'' originates from the fact that the finer mesh has edges which are not refinements of the edges of the coarse mesh. In \cite{TanPhD}, several possible injection operators were discussed, but most of them turned out to be unstable. Following these results, a heterogeneous multigrid method with continuous coarse space was developed in~\cite{CockburnDGT2013}. Alternatively, \cite{FabienKMR19} applies a $p$-multigrid method to obtain a system which is equivalent to a face-centered finite volume discretization, and uses an $h$-multigrid method, afterwards. In \cite{LuRK2020}, we constructed a  multigrid method which is homogeneous in the sense that it employs the same HDG method on all levels.
We proved uniform convergence of the method for stabilization parameters $\tau_\level \sim{h_\level^{-1}}$ under the assumption of elliptic regularity. We used the injection operator from~\cite{ChenLX2014}. It is higher order accurate depending on the polynomial degrees, but involves wide stencils resulting from interpolation into continuous subspaces.

In this paper, we analyze the V-cycle multigrid method under abstract assumptions on the injection operator. These assumptions are met by the previously used continuous injection, but allow for a much wider class. In particular, injection operators must be conforming with continuous subspaces, but are not required to interpolate into these.
Additionally, our new analysis only assumes the more general condition  $\tau_\level h_\level \lesssim 1$ on the stabilization parameter. In particular, the convergence analysis of multigrid methods for the hybridized Raviart-Thomas (RT-H) method and the hybridized Brezzi-Douglas-Marini (BDM-H) method is covered.

The arguments in this article are centered around three sets of assumptions. There are assumptions on the local solvers of the HDG methods, labelled (LS1)--(LS6) in subsection~\ref{sec:ass-local}. These concern stability, consistency, and convergence of the method in several norms, and all of them have been proven in previous publications. Second are our new assumptions on the injection operators, namely~\eqref{EQ:IA1} and~\eqref{EQ:IA2} in subsection~\ref{sec:ass-injection}. These two sets enable us to prove assumptions~\eqref{EQ:precond1} to~\eqref{EQ:precond3} in subsection~\ref{SEC:main_convergence_result}. These are from the article~\cite{DuanGTZ2007} and they are sufficient for uniform multigrid convergence.

The remainder of this paper is structured as follows:
First, we introduce the considered model and some general assumptions. Afterwards, Section \ref{SEC:injection} discusses possible injection operator, and Section \ref{SEC:multigrid} describes the multigrid method and its main convergence result. The preliminaries of this main result are shown to hold true (under our general assumptions) in Section \ref{SEC:analysis}. A list of HDG methods which are covered by our analysis is provided in Section \ref{SEC:possible_methods} together with sources of the proofs of their properties. The remainder of the paper consists of numerical experiments in Section \ref{SEC:numerics} and a discussion of the achieved results of the publication.
%
\section{Model equation and discretization}
%
We consider the standard diffusion equation in mixed form defined on a polygonally bounded Lipschitz domain $\Omega \subset \mathbb R^d$ with boundary $\partial \Omega$. We assume homogeneous Dirichlet boundary conditions on $\partial\Omega$. Thus, we approximate solutions $(u, \vec q)$ of
\begin{subequations}\label{EQ:diffusion_mixed}
\begin{align}
 \Div \vec q & = f && \text{ in } \Omega,\\
 \vec q + \nabla u & = 0 && \text{ in } \Omega,\\
 u & = 0 && \text{ on } \partial \Omega,
\end{align}
\end{subequations}
for a given function $f$. In the analysis, we will assume elliptic regularity, namely $u \in H^2(\Omega)$ if $f \in L^2(\Omega)$, such there is a constant $c>0$ for which holds
\begin{gather}
  |u|_{H^2(\Omega)}
  \le c \| f \|_{L^2(\Omega)}.
\end{gather}
Here and in the following, $L^2(\Omega)$ denotes the space of square integrable functions on $\Omega$ with inner product and norm
\begin{equation}
 (u,v)_0 := \int_\Omega u v \dx, \qquad \text{and} \qquad \| u \|^2_0 := (u,u)_0.
\end{equation}
The space $H^k(\Omega)$ is the Sobolev space of $k$-times weakly
differentiable functions with derivatives in $L^2(\Omega)$ with seminorm $\left|\cdot\right|_{H^k(\Omega)}$. The broken Sobolev space on the mesh $\mesh$ is $H^k(\mesh)$ with semi-norm $\left|\cdot\right|_{k,\mesh}$.
We note that the assumption of homogeneous boundary data was introduced for simplicity of presentation and can be lifted by standard arguments.
%
\subsection{Spaces for the HDG multigrid method}
%
Starting out from a subdivision $\mesh_0$ of $\Omega$ into simplices,
we construct a hierarchy of meshes $\mesh_\level$ for
$\level = 1,\dots, L$ recursively by refinement, such that each cell
of $\mesh_{\level-1}$ is the union of several cells of mesh
$\mesh_\level$. We assume that the mesh is regular, such that each facet
of a cell is either a facet of another cell or on the
boundary. Furthermore, we assume that the hierarchy is shape regular
and thus the cells are neither anisotropic nor otherwise distorted.
We call $\level$ the level of the quasi-uniform mesh $\mesh_\level$ and denote by
$h_\level$ the characteristic length of its cells. We assume that
refinement from one level to the next is not too fast, such that there
is a constant $c_\text{ref} > 0$ with
\begin{equation}\label{EQ:cref}
  h_\level \ge c_\text{ref} h_{\level-1}.
\end{equation}
This condition holds obviously for bisection as well as regular refinement.

By $\faceSet_\level$ we denote the set of faces of $\mesh_\level$.
The subset of faces on the boundary is
\begin{gather}
  \faceSetDir_\level := \{\face \in \faceSet_\level : \face \subset \partial \Omega \}.
\end{gather}
Moreover, we define
$\faceSet^\elem_\level := \{ \face \in \faceSet_\level : \face \subset
\partial \elem \}$ as the set of faces of a cell $\elem\in\mesh_\level$.
On the set of faces, we define the space $L^2(\faceSet_\ell)$ as the space of
square integrable functions with the inner product
\begin{gather}
  \langle \lambda, \mu \rangle_\level
  := \sum_{\elem \in \mesh_\level} \frac{|\elem|}{|\partial \elem|}
  \int_{\partial \elem} \lambda \mu \ds \cong  h_\level \sum_{\face \in \faceSet_\level} \int_{\face} \lambda \mu \ds.
\end{gather}
Note that interior faces appear twice in this definition such that expressions like $\langle u, \mu \rangle_\level$ with possibly discontinuous $u \in H^1(\mesh_\level)$ and $\mu \in L^2(\faceSet)$ are defined without further ado. Additionally, this inner product scales with $h_\level$ like the $L^2$-inner product in the bulk domain. Its induced norm is defined by $ \| \mu \|^2_\level = \langle \mu, \mu \rangle_\level$.

Let $p\ge 1$ and $\polynomials_p$ be the space of (multivariate)
polynomials of degree up to $p$. Then, we define the space of piecewise polynomial functions on the skeleton by
\begin{gather}
  \skeletalSpace_\level := \left\{ \lambda \in L^2 (\faceSet_\level) \;\middle|\;
    \begin{array}{r@{\,}c@{\,}ll}
  \lambda_{|\face} &\in& \polynomials_p & \forall \face \in \faceSet_\level\\
  \lambda_{|\face} &=& 0 & \forall \face \in \faceSetDir_\level    
    \end{array}
  \right\}.
\end{gather}

The HDG method involves local spaces $V_\elem$ and $\vec W_\elem$ and a local solver on each mesh cell
$\elem \in \mesh_\level$, producing cellwise approximations $u_\elem \in V_\elem$
and and $\vec q_\elem\in \vec W_\elem$ of the functions $u$ and $\vec q$ in
equation~\eqref{EQ:diffusion_mixed}, respectively. We will also use the concatenations of the spaces $V_\elem$
and $\vec W_\elem$, respectively, as a function space on $\Omega$, namely
\begin{gather}
  \label{EQ:dg_spaces}
  \begin{aligned}
    \discElementSpace_\level
    &:=\bigl\{ v \in L^2(\Omega)
    & \big|\;v_{|\elem} &\in V_\elem,
    &\forall \elem &\in \mesh_\level \bigr\},\\
    \vec W_\level
    &:=\bigl\{ \vec q \in L^2(\Omega;\mathbb R^d)
    & \big|\;\vec q_{|\elem} &\in \vec W_\elem,
    &\forall \elem &\in \mesh_\level \bigr\}.    
  \end{aligned}
\end{gather}
%
\subsection{Hybrid discontinuous Galerkin method for the diffusion equation}\label{SEC:HDG_definition}
%
The HDG scheme for~\eqref{EQ:diffusion_mixed} on a mesh $\mesh_\level$
consists of a local solver and a global coupling equation. The local
solver is defined cellwise by a weak formulation
of~\eqref{EQ:diffusion_mixed} in the discrete spaces
$V_\elem \times \vec W_\elem$ and defining suitable numerical traces and fluxes. Namely, given
$\lambda \in \skeletalSpace_\level$ find $u_\elem \in V_\elem$ and
$\vec q_\elem \in \vec W_\elem$ , such that
\begin{subequations}\label{EQ:hdg_scheme}
\begin{align}
  \int_\elem \vec q_\elem \cdot \vec p_\elem \dx - \int_\elem u_\elem \Div \vec p_\elem \dx
  & = - \int_{\partial \elem} \lambda \vec p_\elem \cdot \Nu \ds
    \label{EQ:hdg_primary}
  \\
  \int_{\partial \elem} ( \vec q_\elem \cdot \Nu + \tau_\level u_\elem ) v_\elem \ds - \int_\elem \vec q_\elem \cdot \nabla v_\elem \dx
  & = \tau_\level \int_{\partial \elem} \lambda v_\elem \ds \label{EQ:hdg_flux}
\end{align}
\end{subequations}
hold for all $v_\elem \in V_\elem$, and all $\vec p_\elem \in \vec W_\elem$, and for
all $\elem \in \mesh_\level$. Here,
$\Nu$ is the outward unit normal with respect to $\elem$ and $\tau_\level > 0$
is the penalty coefficient. While the local solvers are implemented
cell by cell, it is helpful for the analysis to combine them by
concatenation. Thus, the local solvers define a mapping
\begin{gather}
  \begin{split}
    \skeletalSpace_\level & \to \discElementSpace_\level \times \vec W_\level\\
   \lambda &\mapsto (\localU_\level \lambda, \localQ_\level \lambda),
 \end{split}
\end{gather}
such that on each cell $\elem\in \mesh_\level$ holds
$\localU_\level \lambda = u_\elem$ and
$ \localQ_\level \lambda = \vec q_\elem$. In the same way, we define
operators $\localU_\level f$ and $ \localQ_\level f$ for
$f\in L^2(\Omega)$, where now the local solutions are defined by
the system
\begin{subequations}\label{EQ:hdg_f}
  \begin{align}
    \int_\elem \vec q_\elem \cdot \vec p_\elem \dx - \int_\elem u_\elem \Div \vec p_\elem \dx
    & = 0
      \label{EQ:hdg_f_primary}
    \\
    - \int_\elem \vec q_\elem \cdot \nabla v_\elem \dx  + \int_{\partial \elem} ( \vec q_\elem \cdot \Nu + \tau_\level u_\elem ) v_\elem \ds
    & =  \int_{\elem} f v_\elem \dx.
      \label{EQ:hdg_f_flux}
\end{align}
\end{subequations}

Once $\lambda$ has been computed, the HDG approximation
to~\eqref{EQ:diffusion_mixed} on mesh $\mesh_\level$ will be computed as
\begin{equation}
    u_\level = \localU_\level \lambda + \localU_\level f, \qquad
    \vec q_\level = \localQ_\level \lambda + \localQ_\level f
\end{equation}

The global coupling condition is derived through a discontinuous
Galerkin version of mass balance and reads: Find
$\lambda \in \skeletalSpace_\level$, such that for all
$ \mu \in \skeletalSpace_\level$
\begin{equation}
  \sum_{\elem \in \mesh_\level}
  \sum_{\face \in \faceSet^\elem_\level \setminus \faceSetDir_\level}
   \int_\face \left( \vec q_\level \cdot \Nu
    + \tau_\level (u_\level - \lambda)\right) \mu \ds = 0.\label{EQ:hdg_global}
\end{equation}

In~\cite{CockburnGL2009}, it is shown that $(\lambda, u_\level, \vec q_\level) \in \skeletalSpace_\level \times V_\level \times \vec W_\level$ is the solution of the coupled system~\eqref{EQ:hdg_scheme}---\eqref{EQ:hdg_global} if and only if it is the solution of
\begin{subequations}\label{EQ:hdg_condensed}
\begin{equation}\label{EQ:hdg_condensed_forms}
 a_\level (\lambda, \mu) = b_\level(\mu) \qquad \forall \mu \in \skeletalSpace_\level,
\end{equation}
with
\begin{align}
 a_\level(\lambda, \mu) = & \int_\Omega \localQ_\level \lambda \localQ_\level \mu \dx + \sum_{\elem \in \mesh_\level} \int_{\partial \elem} \tau_\level (\localU_\level \lambda - \lambda) (\localU_\level \mu - \mu) \ds \label{EQ:bilinear_condensed},\\
 b_\level(\mu) = & \int_\Omega \localU_\level \mu f \dx.
\end{align}
\end{subequations}
Furthermore, the bilinear form $a_\level(\lambda, \mu)$ is symmetric and positive definite.
Thus, it induces a norm
\begin{equation}
  \| \mu \|^2_{a_\level} = a_\level(\mu, \mu),
\end{equation}

We close this subsection by associating an operator
$A_\ell\colon \skeletalSpace_\level \to \skeletalSpace_\level$ with
the bilinear form $a_\level(\cdot,\cdot)$ by the relation
\begin{equation}\label{EQ:def_A}
 \langle A_\level \lambda, \mu \rangle_\level = a_\level(\lambda, \mu) \qquad \forall \mu \in \skeletalSpace_\level.
\end{equation}
\begin{remark}
 Setting $\tau_\level \equiv 0$ in the definition of the HDG methods yields hybridized versions of classical mixed methods. Namely, for $\vec W_\elem = [\polynomials_p]^d + \vec x \polynomials_p$ and $\discElementSpace_\elem = \polynomials_p$ we obtain the hybridized Raviart--Thomas (RT-H) method. If $\vec W_\elem = [\polynomials_p]^d$ and $\discElementSpace_\elem = \polynomials_{p-1}$, this defines the hybridized Brezzi--Douglas--Marini (BDM-H) method. In this sense HDG methods are a generalization of these methods and therefore they are covered by our analysis.
\end{remark}
%
\subsection{Operators for the multigrid method and analysis}

After the discrete operator $A_\level$ has been characterized, we introduce the remaining operators here. First, there is an injection operator $\injectionOp_\level \colon \skeletalSpace_{\level - 1} \to \skeletalSpace_\level$. Properties of $\injectionOp_\level$ that ensure our analytical results as well as possible choices of injection operators are presented below in Section~\ref{SEC:injection}.
Next, there are two operators from $\skeletalSpace_{\level}$ to $\skeletalSpace_{\level-1}$, which replace the $L^2$-projection and the Ritz projection of conforming methods, respectively. They are $\projectionOrthogonalOP_{\level-1}$  and $\projectionOp_{\level-1}$ defined by the conditions
\begin{xalignat}3
 \projectionOrthogonalOP_{\level-1}&\colon \skeletalSpace_\level \to \skeletalSpace_{\level-1},
 &\langle \projectionOrthogonalOP_{\level-1} \lambda, \mu \rangle_{\level-1}
 &= \langle \lambda, \injectionOp_\level \mu\rangle_\level
 && \forall \mu \in \skeletalSpace_{\level-1}.
 \label{EQ:L2_projection_definition}
 \\
 \projectionOp_{\level-1}&\colon \skeletalSpace_\level \to \skeletalSpace_{\level-1},
 &a_{\level-1}(\projectionOp_{\level-1} \lambda, \mu)
 &= a_\level(\lambda, \injectionOp_\level \mu)
 && \forall \mu \in \skeletalSpace_{\level-1},
 \label{EQ:projection_definition}
\end{xalignat}
The operator $\projectionOrthogonalOP_{\level-1}$ (or a discrete variation of it) is used in the implementation, while $\projectionOp_{\level-1}$ is key to the analysis.

For the sake of analysis, we also introduce the $L^2$-projections
\begin{align}
 \skeletalProj_\level \colon & H^2(\Omega) \cap H^1_0(\Omega) \to \skeletalSpace_\level, && \langle \skeletalProj_\level u , \mu \rangle_\level = \langle u, \mu \rangle_\level & \forall \mu \in \skeletalSpace_\level,\\
 \discProj_\level \colon & H^1(\Omega) \to \discElementSpace_\level, && (\discProj_\level u, w)_0 = (u,w)_0 & \forall w \in \discElementSpace_\level,
\end{align}
These projections obviously satisfy the standard $H^1$-stability and $L^2$-approximation properties
\begin{xalignat}2
 \| u - \skeletalProj_\level u \|_\level \lesssim & h_\level^2 |u|_2,  && \forall u \in H^2(\Omega),\label{EQ:bdrH2_approx}\\
 \| u - \discProj_\level u \|_0 \lesssim & h_\level |u|_{1},  && \forall u \in H^1(\Omega).\label{EQ:L2H1_approx}
\end{xalignat}
Here and in the following, $\lesssim$ has the meaning of smaller than or equal to up to a constant independent of the mesh size $h_\ell$ or the multigrid level $\ell$. Moreover, we set
\begin{equation}
 \contThreeElementSpace{\level} := \{ u \in H^1_0(\Omega) \colon u|_\elem \in \polynomials_{p+3}(\elem) \; \forall \elem \in \mesh_\level \}.
\end{equation}

The multigrid operator for preconditioning $A_\level$ will be defined in
  Section \ref{SEC:multigrid_algortith}. It will be referred to as
  \begin{gather}
    B_\level\colon \skeletalSpace_\level \to \skeletalSpace_\level.
 \end{gather}
 It relies on a smoother
 \begin{gather}
   R_\level: \skeletalSpace_\level \to \skeletalSpace_\level,
 \end{gather}
  which can be defined in terms of Jacobi or Gauss-Seidel
  iterations, respectively. Denote by $R_\level^\dagger$ the adjoint operator of
  $R_\level$ with respect to
  $\langle \cdot, \cdot \rangle_\level$ and define $R_\level^\iterMgOuter$ by
 \begin{equation}
  R_\level^\iterMgOuter = \begin{cases} R_\level & \text{ if } \iterMgOuter \text{ is odd,} \\ R_\level^\dagger & \text{ if } \iterMgOuter \text{ is even.} \end{cases}
\end{equation}
%
\subsection{Assumptions on local solvers}
\label{sec:ass-local}
%
We assume that the local problem \eqref{EQ:hdg_scheme} satisfies the following conditions for all $\mu \in \skeletalSpace_\level$:
\begin{itemize}
 \item The trace of the local reconstruction $\localU_\level \mu$ approximates the skeletal function $\mu$ itself, namely
 \begin{equation}
  \| \localU_\level \mu - \mu \|_\level \lesssim h_\level \| \localQ_\level \mu \|_0. \tag{LS1}\label{EQ:LS1}
 \end{equation}
 \item Both $\localQ_\level \mu$ and $\localU_\level \mu$ are bounded by the traces:
 \begin{equation}
  \| \localQ_\level \mu \|_0 \lesssim h^{-1}_\level \| \mu \|_\level \quad \text{ and } \quad \| \localU_\level \mu \|_0 \lesssim \| \mu \|_\level. \tag{LS2}\label{EQ:LS2}
 \end{equation}
 \item The reconstruction $\localQ_\level \mu$ approximates the negative gradient of $\localU_\level \mu$. That is,
 \begin{equation}
  \| \localQ_\level \mu + \nabla \localU_\level \mu \|_0 \lesssim h_\level^{-1} \| \localU_\level \mu - \mu \|_\level. \tag{LS3}\label{EQ:LS3}
 \end{equation}
 \item Consistency with the standard linear finite element method in the sense that for  $w \in \linElementSpace_\level$ and $\mu = \gamma_\level w$ there holds
 \begin{equation}
  \localQ_\level \mu = - \nabla w \quad \text{ and } \quad \localU_\level \mu = w. \tag{LS4}\label{EQ:LS4}
 \end{equation}
Here, $\gamma_\level$ is the trace operator mapping sufficiently smooth functions on the domain $\Omega$ to their trace on the skeleton $\skeletal_\level$ and
\begin{equation}
 \linElementSpace_\level := \bigl\{ u \in H^1_0(\Omega) \; \big| \; u_{|\elem} \in \polynomials_1(\elem) \;\; \forall \elem \in \mesh_\level\bigr\}.
\end{equation}
 \item Convergence of the Lagrange multipliers to the projected traces of the analytical solution. That is, if $\lambda$ is the skeletal function of HDG approximation of $u \in H^2(\Omega)$, which itself solves \eqref{EQ:diffusion_mixed}, we have
 \begin{equation}
  \| \skeletalProj_\level u - \lambda \|_\level \lesssim h_\level^2 | u |_2. \tag{LS5}\label{EQ:LS5}
 \end{equation}
 \item The standard spectral properties of the condensed stiffness matrix hold in the sense that
 \begin{equation}
  C_1 \| \mu \|^2_\level \le a_\level(\mu,\mu) \le C_2 h^{-2}_\level \| \mu \|^2_\level. \tag{LS6}\label{EQ:LS6}
 \end{equation}
\end{itemize}

A list of hybrid methods matching these assumptions can be found in Section \ref{SEC:possible_methods}.
%
\section{Injection operators}\label{SEC:injection}
%
We discuss injection operators from two points of view: first, we introduce conditions on such operators which allow us to prove multigrid convergence. As a particular consequence of these conditions, we obtain a quasi-orthogonality condition at the end of the first subsection.
In the second subsection, we present examples for injection operators and prove that the conditions apply.

\subsection{Assumptions on injection operators}
\label{sec:ass-injection}
The purpose of this article is the abstraction from specific injection operators as they have been defined in previous publications. To this end, we will prove convergence of the standard $V$--cycle multigrid method with an injection operator $\injectionOp_\level$ admitting the following assumptions:
\begin{enumerate}
 \item Stability of the injection operator:
 \begin{equation}\tag{IA1}\label{EQ:IA1}
  \| \injectionOp_\level \lambda \|_\level \lesssim \| \lambda \|_{\level - 1} \qquad \forall \lambda \in \skeletalSpace_{\level - 1}.
 \end{equation}
 \item Trace identity for conforming linear finite elements:
 \begin{equation}\tag{IA2}\label{EQ:IA2}
  \injectionOp_\level \gamma_{\level - 1} w = \gamma_\level w \qquad \forall w \in \linElementSpace_{\level - 1}.
 \end{equation}
\end{enumerate}
Trace identity~\eqref{EQ:IA2} for the injection operator together with the consistency~\eqref{EQ:LS4} of the local solver with linear conforming elements yields
\begin{lemma}[Quasi-orthogonality]\label{LEM:quasi_orth}
 Assuming \eqref{EQ:IA2} and \eqref{EQ:LS4}, then for any $\lambda \in \skeletalSpace_{\level}$ there holds:
 \begin{equation}
  (\localQ_\level \lambda - \localQ_{\level - 1} \projectionOp_{\level - 1} \lambda, \nabla w)_0 = 0 \qquad \forall w \in \linElementSpace_{\level - 1}.
 \end{equation}
\end{lemma}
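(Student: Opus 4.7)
The plan is to test the bilinear forms against the trace of the linear conforming function $w$ on the respective meshes and exploit the consistency assumption \eqref{EQ:LS4} together with the trace identity \eqref{EQ:IA2} to make the penalty terms of $a_\level$ and $a_{\level-1}$ vanish.

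Concretely, given $w \in \linElementSpace_{\level-1}$, I would set $\mu := \gamma_{\level-1} w \in \skeletalSpace_{\level-1}$ as the test function. By \eqref{EQ:LS4} applied on level $\level-1$ we obtain $\localQ_{\level-1}\mu = -\nabla w$ and $\localU_{\level-1}\mu = w$, so that $\localU_{\level-1}\mu - \mu = 0$ on the skeleton $\skeletal_{\level-1}$. On the other hand, \eqref{EQ:IA2} gives $\injectionOp_\level \mu = \gamma_\level w$, and since $w$ is also a linear finite element function on the finer mesh $\mesh_\level$ (by nestedness $\linElementSpace_{\level-1} \subset \linElementSpace_\level$), a second application of \eqref{EQ:LS4} yields $\localQ_\level(\injectionOp_\level \mu) = -\nabla w$ and $\localU_\level(\injectionOp_\level \mu) - \injectionOp_\level \mu = 0$ on $\skeletal_\level$.

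Substituting these identities into the definition~\eqref{EQ:bilinear_condensed} of the bilinear forms kills the penalty contributions, leaving
\begin{equation*}
 a_\level(\lambda, \injectionOp_\level \mu) = -\int_\Omega \localQ_\level \lambda \cdot \nabla w \dx,
 \qquad
 a_{\level-1}(\projectionOp_{\level-1}\lambda, \mu) = -\int_\Omega \localQ_{\level-1}\projectionOp_{\level-1}\lambda \cdot \nabla w \dx.
\end{equation*}
The defining relation \eqref{EQ:projection_definition} of $\projectionOp_{\level-1}$ equates the two left-hand sides, and subtracting yields the claimed orthogonality.

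There is no real obstacle beyond spotting the right test function; the only point requiring a brief justification is that $w \in \linElementSpace_{\level-1}$ also lies in $\linElementSpace_\level$ so that \eqref{EQ:LS4} can be invoked on the fine level, which is immediate from the nestedness of the mesh hierarchy assumed in the construction of $\{\mesh_\level\}$.
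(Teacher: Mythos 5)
Your proof is correct and follows essentially the same route as the paper: choose $\mu = \gamma_{\level-1}w$, use \eqref{EQ:IA2} and \eqref{EQ:LS4} on both levels to reduce each bilinear form to $-(\localQ\,\cdot,\nabla w)_0$ with vanishing penalty terms, and conclude via the defining relation \eqref{EQ:projection_definition} of $\projectionOp_{\level-1}$. The paper's version is merely terser, leaving the substitution into \eqref{EQ:bilinear_condensed} implicit.
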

\begin{proof}
 For $w \in \linElementSpace_{\level - 1}$ let $\mu = \gamma_{\level - 1} w$, we have by \eqref{EQ:IA2} and \eqref{EQ:LS4} that
 \begin{equation}
  \injectionOp_\level \mu = \gamma_\level w, \qquad \localQ_{\level - 1} \mu = \localQ_\level \injectionOp_\level \mu = - \nabla w, \qquad \localU_{\level - 1} \mu = \localU_\level \injectionOp_\level \mu = w,
 \end{equation}
 which implies the result.
\end{proof}
\subsection{Possible injection operators}
In this section, we describe several injection operators for which assumptions~\eqref{EQ:IA1} and~\eqref{EQ:IA2} hold. They are labeled $\injectionOp_\level^0$ to $\injectionOp_\level^3$ and their performance in a multigrid method is tested experimentally in Section~\ref{SEC:numerics}.

The assumptions were modeled after the analysis of the ``continuous'' injection operator $\injectionOp^0_\level$, which was introduced in~\cite{ChenLX2014}, where also its stability was proven in the sense of Lemma \ref{TH:conv_result}.
In~\cite[Sect.~2.3]{LuRK2020}, 
convergence of the same multigrid method as in this present article with injection operator $\injectionOp_\level^0$ was proven under the additional assumption  $\tau_\level = \tfrac{c}{h_\level}$. Here, this analysis is generalized to $\tau_\level h_\level \lesssim 1$.

The operator $\injectionOp_\level^0$ is constructed as follows: For $\lambda\in \skeletalSpace_{\level-1}$, apply a \emph{continuous extension operator} defined cellwise: For finite elements defined by Lagrange interpolation (sometimes called nodal elements), we assign the following value at the interpolation points $\vec x$:
\begin{equation}
  [\extensionOp_{\level-1} \lambda](\vec x)
  = \begin{cases}
   \avg{\lambda(\vec x)} & \text{ if $\vec x$ is on the boundary of a face,} \\ \lambda(\vec x) & \text{ if $\vec x$ is in the interior of a face,} \\
   [\localU_{\level-1} \lambda](\vec x) & \text{ if $\vec x$ is in the interior of a cell,} \end{cases}
 \end{equation}
 Here, $\avg{\lambda (\vec x)}$ denotes the arithmetic mean of the values of $\lambda$ from all faces sharing the point $\vec x$.
 Then, $\injectionOp_\level^0\lambda$ is computed by taking the trace on the skeleton of this continuous function, namely
\begin{equation}
  \label{EQ:def-I0}
  \injectionOp^0_\level \lambda = \gamma_\level\extensionOp_{\level-1} \lambda \qquad  \forall \face \in \faceSet_\level.
 \end{equation}

 A simplified version of this operator has been used for the embedded discontinuous Galerkin (EDG) method in \cite{LuRK2021}. The injection operator $\injectionOp_\level^0$ has a wide stencil. In fact, its values on a single fine grid cell involve values from \emph{all} coarse cells sharing a vertex with the parent of the fine cell. This is detrimental for an efficient implementation and, as we see in the numerical experiments, also for convergence of the method. Therefore, we investigate more local injection operators.
 
 We will discuss these operators in the context of regular refinement (also known as red refinement) and of bisection (see Figure~\ref{fig:refinement}).
 \begin{figure}
     \centering
     \includegraphics[width=.8\textwidth]{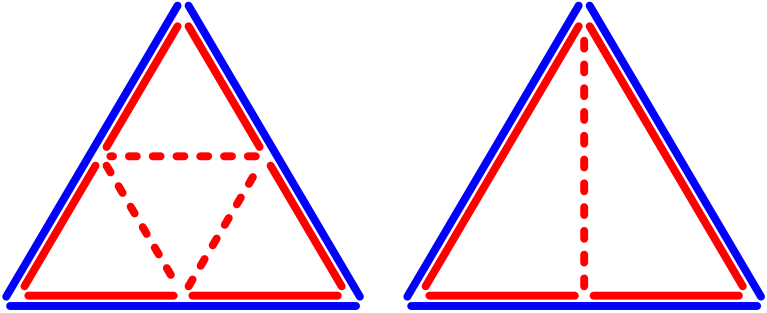}
     \caption{Examples of original faces (blue) faces of refined mesh (red) for regular refinement (left) and bisection (right). Faces which are refinements are solid and newly introduced faces are dashed.}
     \label{fig:refinement}
 \end{figure}
When the mesh $\mesh_{\level-1}$ is refined to obtain $\mesh_\level$, this refinement results in two classes of faces: those which are refinements of the coarser faces and those which are introduced inside the coarser cells. The following three injection operators all act as the embedding operator on the first set. They differ in the assignment of values to the new faces, but they all have in common, that only the values of $\lambda$ on the boundary of the cell enclosing the new face are involved. Thus, they can be deemed \emph{local} injection operators.

The injection operator $\injectionOp_\level^1$ is defined by interpolation only. It is particularly simple for regular refinement in two dimensions, where each new face has its end points on a face of $\mesh_{\ell-1}$. For such a face $\face$ we name these points $\vec a$ and $\vec b$.
With this information, we can define $\injectionOp_\level^1$ on $\face$ by linear interpolation
 \begin{equation}
   \label{EQ:def-I1}
  [\injectionOp_\level^1 \lambda] (\vec x) = \frac{|\vec x - \vec b| \lambda (\vec a) + |\vec x - \vec a| \lambda (\vec b)}{|\vec a - \vec b|},
 \end{equation}
 where $|\cdot|$ denotes the Euclidean norm. For bisection, one point ($\vec a$) is on a coarse face and the other ($\vec b)$ is at a vertex. Thus, the method must be modified such that instead of $\lambda(\vec b)$, we use the average $\avg{\lambda(\vec b)}$.
 
 For higher dimensions, this method can be extended similarly to linear polynomials and higher order methods. For regular refinement, we only need to assign the values to the vertices of new faces, which will be done by averaging.
 
 The injection operator $\injectionOp_\level^2$ uses the local reconstruction of the primary unknown $u$ to assign values to new faces. This results in
 \begin{equation}
 \label{EQ:def-I2}
 \injectionOp_\level^2 \lambda_{|\face} = \localU_{\level - 1} \lambda_{|\face},
 \end{equation}
 for any newly generated face $\face$. This method works the same way in any space dimension and independently of the refinement pattern.
 
 For the third injection operator $\injectionOp_\level^3$, we combine the previous two approaches. If the finite element on the face is defined by Lagrange interpolation, the values of $\injectionOp_\level^3\lambda$ on the boundary of a new face $\face$ are obtained like with $\injectionOp_\level^1$, while they are chosen as for $\injectionOp_\level^2$ in the interior.
 Thus, in two dimensions, for all support points (nodes) $\vec x \in \face$, we set
 \begin{equation}
 \label{EQ:def-I3}
  [\injectionOp_\level^3 \lambda] (\vec x)
  = \begin{cases}
  \lambda(\vec x) & x\in \partial \face,\\
  [\localU_{\level - 1} \lambda] (\vec x) &\text{otherwise}.
  \end{cases}
 \end{equation}
 This methods inherits all the complications mentioned in the description of $\injectionOp_\level^1$.
 
\begin{lemma}
  Injection operator $\injectionOp_\level^1$ is bounded in the sense of~\eqref{EQ:IA1}.
  Assuming additionally \eqref{EQ:LS2}, injection operators  $\injectionOp_\level^0$, $\injectionOp_\level^2$, and $\injectionOp_\level^3$ admit assumption~\eqref{EQ:IA1}.
  Assumption~\eqref{EQ:IA2} holds for $\injectionOp_\level^1$. If, additionally, \eqref{EQ:LS4} holds, then also  $\injectionOp_\level^0$, $\injectionOp_\level^2$, and $\injectionOp_\level^3$ admit assumption~\eqref{EQ:IA2}.
 \end{lemma}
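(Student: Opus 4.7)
The plan is to split every fine face $F \in \faceSet_\level$ into two disjoint classes: \emph{refined} faces contained in a coarse face $F' \in \faceSet_{\level-1}$, and \emph{new} faces lying in the interior of a coarse cell $T \in \mesh_{\level-1}$. For each of the four injection operators the value on a refined face is either $\lambda$ itself (for $\injectionOp_\level^1$, $\injectionOp_\level^2$, $\injectionOp_\level^3$) or a polynomial bounded in terms of $\lambda$ on the same coarse face (for $\injectionOp_\level^0$, through the Lagrange interpolation of nodal values of $\lambda$ and its averages). Hence the refined portion of $\|\injectionOp_\level \lambda\|_\level^2$ reassembles, up to a factor $h_\level/h_{\level-1} \le 1$, to $\|\lambda\|_{\level-1}^2$, which already bounds that part correctly. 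What remains is to dominate the new-face contribution.

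For $\injectionOp_\level^1$, the value on a new face $F$ is the polynomial interpolation of point-values of $\lambda$ taken at a bounded set of nodes lying on the coarse skeleton. A standard inverse estimate bounds each such nodal value by $h_{\level-1}^{-(d-1)/2} \|\lambda\|_{L^2(F_*)}$ for an adjacent coarse face $F_* \in \faceSet_{\level-1}$; combining with $|F|\sim h_\level^{d-1}$, summing over the bounded number of new faces per coarse cell, and multiplying by $h_\level$ delivers~\eqref{EQ:IA1} without any additional hypothesis. For $\injectionOp_\level^0$, $\injectionOp_\level^2$, and $\injectionOp_\level^3$, the new-face values involve the local reconstruction $\localU_{\level-1} \lambda$ (directly for $\injectionOp_\level^2$, as part of the continuous extension $\extensionOp_{\level-1}\lambda$ for $\injectionOp_\level^0$, and in the interior of the face for $\injectionOp_\level^3$). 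Because $\localU_{\level-1}\lambda$ is a polynomial of bounded degree on each coarse cell, the standard combination of trace and inverse inequalities on the fine subcells of $T$ yields $\sum_{F \subset T, F \text{ new}} \int_F |\localU_{\level-1}\lambda|^2 \ds \lesssim h_\level^{-1} \|\localU_{\level-1}\lambda\|_{L^2(T)}^2$; summing over coarse cells and multiplying by $h_\level$ produces $\|\localU_{\level-1}\lambda\|_0^2$, which is bounded by $\|\lambda\|_{\level-1}^2$ thanks to~\eqref{EQ:LS2}.

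For~\eqref{EQ:IA2}, I would fix $w \in \linElementSpace_{\level-1}$, set $\lambda = \gamma_{\level-1} w$, and again split by face class. On refined faces all four operators act as embeddings of $\lambda$, and this embedded value equals $\gamma_\level w$ because $w$ is a single polynomial across the coarse face. On new faces the verification reduces to matching the linear polynomial $w|_T$ on $F$: for $\injectionOp_\level^1$ the interpolation formula~\eqref{EQ:def-I1} reproduces $w|_F$ because continuity of $w$ makes the vertex-averaging harmless in the bisection case; for $\injectionOp_\level^2$ and $\injectionOp_\level^3$ the interior nodal values are $\localU_{\level-1}\gamma_{\level-1} w = w$ by~\eqref{EQ:LS4}; and for $\injectionOp_\level^0$ the continuous extension $\extensionOp_{\level-1}\gamma_{\level-1} w$ coincides with $w$ at every interpolation point (face-interior: $\lambda = w$; face-boundary: the arithmetic mean of continuous values equals the value; cell-interior: $\localU_{\level-1}\gamma_{\level-1} w = w$ by~\eqref{EQ:LS4}), so taking the trace yields $\gamma_\level w$.

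The one step that requires real care is the stability bound for $\injectionOp_\level^1$ under bisection, where a new face may have an endpoint sitting at a vertex of $\mesh_{\level-1}$: the pointwise value of $\lambda$ there must be replaced by an arithmetic mean and the inverse estimate invoked on each adjacent coarse face, with shape regularity ensuring that only a uniformly bounded number of coarse faces meet the vertex. Everything else is a routine combination of trace and inverse inequalities for polynomials of bounded degree together with the local-solver hypotheses~\eqref{EQ:LS2} and~\eqref{EQ:LS4}.
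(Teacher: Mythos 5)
Your proposal is correct and follows essentially the same route as the paper: split the fine faces into refinements of coarse faces (where all operators act as the identity on $\lambda$) and new interior faces (where the values are controlled either by nodal interpolation of $\lambda$ or by the local reconstruction $\localU_{\level-1}\lambda$ via \eqref{EQ:LS2}), then conclude with inverse/trace estimates and the uniformly bounded number of new faces per coarse cell, with \eqref{EQ:IA2} reduced to \eqref{EQ:LS4} and the reproduction of linears exactly as in the paper. The only nuance worth flagging is that for $\injectionOp_\level^0$ the averaging at face-boundary nodes pulls in values of $\lambda$ from the whole vertex patch $W(\elem)$, not just the same coarse face, so the local bound must be taken over $\partial W(\elem)$ — a point the paper makes explicitly and which your shape-regularity remark covers implicitly.
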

 \begin{proof}
    First, we prove assumption~\eqref{EQ:IA1} for injection operators $\injectionOp^*_\level$ where $*=1,2,3$.
    Let $\elem \in \mesh_{\level-1}$ be a mesh cell which is refined into new cells, and let $\skeletal_\level(\elem)$ be the resulting set of faces. Obviously, for $\lambda\in\skeletalSpace_{\level-1}$ there holds
    \begin{gather}
        \left\| \injectionOp^*_\level \lambda \right\|_{L^2(\partial\elem)}
        = \left\| \lambda \right\|_{L^2(\partial\elem)},
    \end{gather}
    for all three injection operators. For interior faces $\face$, there holds
    \begin{gather}
        \left\| \injectionOp^*_\level \lambda \right\|_{L^\infty(\face)}
        \lesssim \left\| \lambda \right\|_{L^\infty(\partial\elem)},
    \end{gather}
    either by the interpolation property or by the boundedness of $\localU_{\level-1}$. On $\partial\elem$, we can use an inverse estimate, such that we obtain the bound in $L^2$,
    \begin{gather}
        \left\| \injectionOp^*_\level \lambda \right\|_{L^2(\face)}
        \lesssim \left\| \lambda \right\|_{L^2(\partial\elem)}.
    \end{gather}
    The number of faces in $\skeletal_\level(\elem)$ is uniformly bounded by virtue of~\eqref{EQ:cref}. Thus, we obtain
    \begin{gather}
    \label{EQ:injection-lemma-1}
        \left\| \injectionOp^*_\level \lambda \right\|_{L^2(\skeletal_\level(\elem))}
        \lesssim \left\| \lambda \right\|_{L^2(\partial\elem)},
    \end{gather}
    which easily transforms to
    \begin{gather}
        \left\| \injectionOp^*_\level \lambda \right\|_\level
        \lesssim \left\| \lambda \right\|_{\level-1}.
    \end{gather}
    Assumption~\eqref{EQ:IA1} holds for  $\injectionOp_\level^0$ with similar arguments. 
    The additional averaging results in replacing $\partial T$ by $(\partial W(T))$ where $W(T)=\{T'| T'\cap T\neq \emptyset\}$
    in~\eqref{EQ:injection-lemma-1}.
    
    For  $\injectionOp^0_\level$, $\injectionOp_\level^2$ and  $\injectionOp_\level^3$ assumption~\eqref{EQ:IA2} follows from the same property of the local solver, namely~\eqref{EQ:LS4}, while for  $\injectionOp_\level^1$, it is obvious.
 \end{proof}
%
\section{Multigrid method and main convergence result}\label{SEC:multigrid}
%
We consider a standard (symmetric) V-cycle multigrid method applied to the system of linear equations arising from \eqref{EQ:hdg_condensed}. We follow the common approach of treating smoothing and coarse grid corrections separately. As our focus is on the injection operator, we almost entirely ignore the question of smoothing and allow any smoother which fits into the framework of~\cite{BrambleP1992}, in particular pointwise Jacobi and Gauss-Seidel methods. The method in its standard form, see~\cite{BramblePX1991} is presented in Section \ref{SEC:multigrid_algortith} and the analysis based on abstract arguments following~\cite{DuanGTZ2007} follows in Section \ref{SEC:main_convergence_result}.
%
\subsection{Multigrid algorithm}\label{SEC:multigrid_algortith}
%
Let $\iterMgInner \in \mathbb N \setminus \{ 0 \}$ be the number of fine-level smoothing steps. We recursively define the multigrid operator of the refinement level $\level$
\begin{equation}
 B_\level \; : \quad \skeletalSpace_\level \to \skeletalSpace_\level,
\end{equation}
by the following steps. Let $B_0 = A^{-1}_0$. For $\level > 0$, let $x^0 = 0 \in \skeletalSpace_\level$. Then for $\mu\in\skeletalSpace_\level$,
\begin{enumerate}
 \item Define $x^\iterMgOuter \in \skeletalSpace_\level$ for $\iterMgOuter = 1, \ldots, \iterMgInner$ by
 \begin{equation}
  x^\iterMgOuter = x^{\iterMgOuter-1} + R_\level^{\iterMgOuter} ( \mu - A_\level x^{\iterMgOuter-1} ).
 \end{equation}
 \item Set $y^0 = x^\iterMgInner + \injectionOp_\level q$, where $q \in \skeletalSpace_{\level-1}$ is defined as
 \begin{equation}
  q = B_{\level-1} \projectionOrthogonalOP_{\level-1} ( \mu - A_\level x^\iterMgInner).
 \end{equation}
 \item Define $y^\iterMgOuter \in \skeletalSpace_\level$ for $\iterMgOuter = 1, \ldots, \iterMgInner$ as
 \begin{equation}
  y^\iterMgOuter = y^{\iterMgOuter - 1} + R^{\iterMgOuter+\iterMgInner}_\level ( \mu - A_\level y^{\iterMgOuter-1} ).
\end{equation}
\item  Let $B_\level \mu = y^{\iterMgInner}$.
\end{enumerate}
%
\subsection{Main convergence result}\label{SEC:main_convergence_result}
%
The analysis of the multigrid method is based on the framework
introduced in~\cite{DuanGTZ2007}. There, convergence is traced back to
three assumptions. Let $\underline \lambda^A_\level$ be the largest eigenvalue of $A_\level$, and
\begin{gather}
 K_\level := \bigl(1 - (1 - R_\level A_\level) (1 - R^\dagger_\level A_\level)\bigr) A^{-1}_\level.
\end{gather}
Then, there exists constants $C_1, C_2, C_3 > 0$ independent of the mesh level $\level$, such that there holds
\begin{itemize}
\item Regularity approximation assumption:
  \begin{equation}\label{EQ:precond1}
    | a_\level(\lambda - \injectionOp_\level \projectionOp_{\level-1} \lambda, \lambda) |
    \le C_1 \frac{\| A_\level \lambda \|^2_\level}{\underline \lambda^A_\level} \qquad \forall \lambda \in \skeletalSpace_\level. \tag{A1}
  \end{equation}
\item Stability of the ``Ritz quasi-projection'' $\projectionOp_{\level-1}$ and injection $\injectionOp_\level:$
 \begin{equation}\label{EQ:precond2}
  \| \lambda - \injectionOp_\level \projectionOp_{\level-1} \lambda\|_{a_\level} \le C_2 \| \lambda \|_{a_\level} \qquad \forall \lambda \in \skeletalSpace_\level. \tag{A2}
\end{equation}
\item Smoothing hypothesis:
 \begin{equation}\label{EQ:precond3}
  \frac{\| \lambda \|^2_\level}{\underline \lambda^A_\level} \le C_3 \langle K_\level \lambda, \lambda \rangle_\level. \tag{A3}
 \end{equation}
\end{itemize}
Theorem~3.1 in~\cite{DuanGTZ2007} reads
\begin{theorem}\label{TH:main_theorem}
 Assume that \eqref{EQ:precond1}, \eqref{EQ:precond2}, and \eqref{EQ:precond3} hold. Then for all $\level \ge 0$,
 \begin{equation}
  | a_\level ( \lambda - B_\level A_\level \lambda, \lambda ) | \le \delta a_\level(\lambda, \lambda),
 \end{equation}
 where
 \begin{equation}
  \delta = \frac{C_1 C_3}{\iterMgInner - C_1 C_3} \qquad \text{with} \qquad \iterMgInner > 2 C_1 C_3.
 \end{equation}
\end{theorem}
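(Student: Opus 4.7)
The plan is to proceed by induction on the refinement level $\level$, following the template of \cite{DuanGTZ2007}. For $\level = 0$, the statement is trivial because $B_0 = A_0^{-1}$, so the error operator $E_0 := I - B_0 A_0$ is zero and any $\delta \geq 0$ will do. For the inductive step, I assume the bound $|a_{\level-1}(E_{\level-1}\mu,\mu)| \le \delta\, a_{\level-1}(\mu,\mu)$ holds on the coarser level and aim to transfer it to level $\level$.

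First I would unwind the four steps of the V-cycle in Section \ref{SEC:multigrid_algortith} to obtain a compact error-propagation identity. Writing $G_\level^{\iterMgOuter} := I - R_\level^{\iterMgOuter} A_\level$ for the one-step smoothing error and denoting the coarse-correction operator by $C_\level := (I - \injectionOp_\level \projectionOp_{\level-1}) + \injectionOp_\level E_{\level-1} \projectionOp_{\level-1}$, a direct computation (using the defining relation $a_\level(\lambda,\injectionOp_\level\mu) = a_{\level-1}(\projectionOp_{\level-1}\lambda,\mu)$ of $\projectionOp_{\level-1}$ to convert the action of $\projectionOrthogonalOP_{\level-1}$ on residuals into an action on $\projectionOp_{\level-1}$) yields
\begin{equation*}
  E_\level = \Bigl(\prod_{\iterMgOuter=\iterMgInner+1}^{2\iterMgInner} G_\level^{\iterMgOuter}\Bigr)\, C_\level\, \Bigl(\prod_{\iterMgOuter=\iterMgInner}^{1} G_\level^{\iterMgOuter}\Bigr).
\end{equation*}
The alternation between $R_\level$ and $R_\level^\dagger$ is what makes the $a_\level$-symmetric pre/post-smoothing pairings precisely assemble into the operator $K_\level$ of~\eqref{EQ:precond3}.

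Next I would estimate $|a_\level(E_\level\lambda,\lambda)|$ by splitting $C_\level$ into its two summands. For the coarse-grid-correction residual $(I-\injectionOp_\level\projectionOp_{\level-1})\lambda$, I would apply Cauchy–Schwarz in $a_\level$, combined with~\eqref{EQ:precond1} to trade $a_\level$-norm for $\|A_\level\lambda\|_\level^2/\underline\lambda^A_\level$, and then absorb this ratio into the smoothing product using~\eqref{EQ:precond3} on each pair of symmetric smoothing steps. After $\iterMgInner$ such pairs, one obtains a factor of the form $\tfrac{C_1 C_3}{\iterMgInner}$ times $a_\level(\lambda,\lambda)$, which after the standard geometric-series manipulation gives the claimed $\delta = C_1 C_3 / (\iterMgInner - C_1 C_3)$.

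For the remaining term $\injectionOp_\level E_{\level-1}\projectionOp_{\level-1}\lambda$, I would invoke the induction hypothesis to bound it by $\delta\, a_{\level-1}(\projectionOp_{\level-1}\lambda,\projectionOp_{\level-1}\lambda)$, then use that $\projectionOp_{\level-1}$ is $a_\level$-orthogonal onto $\injectionOp_\level\skeletalSpace_{\level-1}$, so $a_{\level-1}(\projectionOp_{\level-1}\lambda,\projectionOp_{\level-1}\lambda) = a_\level(\injectionOp_\level\projectionOp_{\level-1}\lambda,\injectionOp_\level\projectionOp_{\level-1}\lambda) \le a_\level(\lambda,\lambda)$ by Pythagoras; here~\eqref{EQ:precond2} ensures that the orthogonal splitting is nondegenerate and guarantees stability across levels. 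Combining the two pieces and solving the resulting recursion for $\delta$ closes the induction.

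The main obstacle I anticipate is the careful bookkeeping that turns the alternating product $\prod G_\level^{\iterMgOuter}$ into a clean application of the smoother operator $K_\level$ from~\eqref{EQ:precond3}: one must verify that pairing the $\iterMgOuter$-th pre-smoothing step with the $(2\iterMgInner-\iterMgOuter+1)$-th post-smoothing step produces exactly the factor $(I - R_\level A_\level)(I - R_\level^\dagger A_\level)$ whose ``defect'' $I - (I-R_\level A_\level)(I-R_\level^\dagger A_\level) = K_\level A_\level$ feeds~\eqref{EQ:precond3}. Once this algebraic identity is in place, the rest amounts to repeated use of Cauchy–Schwarz in the $a_\level$-inner product and the algebraic manipulation that produces the final form of $\delta$.
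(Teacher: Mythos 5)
First, a point of reference: the paper does not prove this theorem at all — it is quoted verbatim as Theorem~3.1 of \cite{DuanGTZ2007}, so the only meaningful comparison is with the proof in that reference. Your overall skeleton is the right one and matches that framework: induction over levels, the error-propagation identity $E_\level = \bigl(\prod G_\level^{\iterMgOuter}\bigr) C_\level \bigl(\prod G_\level^{\iterMgOuter}\bigr)$ with $C_\level = (I-\injectionOp_\level\projectionOp_{\level-1}) + \injectionOp_\level E_{\level-1}\projectionOp_{\level-1}$ (which indeed follows from $\projectionOrthogonalOP_{\level-1}A_\level = A_{\level-1}\projectionOp_{\level-1}$), and the pairing of the adjoint pre-/post-smoothing sweeps into $K_\level$ so that \eqref{EQ:precond1} and \eqref{EQ:precond3} can be combined.

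The step that closes your induction, however, is wrong. You assert that $\injectionOp_\level\projectionOp_{\level-1}$ is an $a_\level$-orthogonal projection and conclude $a_{\level-1}(\projectionOp_{\level-1}\lambda,\projectionOp_{\level-1}\lambda) = a_\level(\injectionOp_\level\projectionOp_{\level-1}\lambda,\injectionOp_\level\projectionOp_{\level-1}\lambda)\le a_\level(\lambda,\lambda)$ ``by Pythagoras.'' Neither the equality nor the inequality holds in this setting: the forms are not inherited, i.e.\ $a_{\level-1}(\mu,\mu)$ and $a_\level(\injectionOp_\level\mu,\injectionOp_\level\mu)$ differ in general (Lemma~\ref{TH:conv_result} provides only one-sided bounds with constants), and \eqref{EQ:projection_definition} gives $a_\level(\lambda-\injectionOp_\level\projectionOp_{\level-1}\lambda,\injectionOp_\level\mu)=a_{\level-1}\bigl((I-\projectionOp_{\level-1}\injectionOp_\level)\projectionOp_{\level-1}\lambda,\mu\bigr)$, which does not vanish because $\projectionOp_{\level-1}\injectionOp_\level\neq I$. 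This failure of orthogonality is exactly the difficulty the nonnested framework of \cite{DuanGTZ2007} exists to handle. The identity that is actually available is the exact relation $a_{\level-1}(\projectionOp_{\level-1}\tilde\lambda,\projectionOp_{\level-1}\tilde\lambda)=a_\level(\tilde\lambda,\tilde\lambda)-a_\level(\tilde\lambda-\injectionOp_\level\projectionOp_{\level-1}\tilde\lambda,\tilde\lambda)$, whose subtracted term has no sign; the upper bound must therefore be recovered by passing that term, too, through \eqref{EQ:precond1} and \eqref{EQ:precond3}, and the lower bound required by the absolute value in the assertion is where \eqref{EQ:precond2} genuinely enters — not as ``nondegeneracy of an orthogonal splitting,'' but as the quantitative substitute for the Pythagoras inequality that fails here. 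As written, your argument would replace $a_\level(\lambda,\lambda)$ by a multiple with a constant exceeding one in the recursion for $\delta$, and the induction would not close with a level-independent contraction number.
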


Thus, in order to prove uniform convergence of the multigrid method, we will now set out to verify these assumptions.

Our analysis focuses to standard smoothers like Jacobi or Gauss-Seidel methods.
Thus, proof of the smoothing hypothesis \eqref{EQ:precond3} reduces to verify the assumptions of~\cite[Theorems 3.1 and 3.2]{BrambleP1992}, which in turn boils down to checking the limited interaction property. This holds trivially, since the bilinear form $a_\ell(.,.)$ couples to degrees of freedom only if they are either associated to the same face or to another face which shares a common mesh cell. We note that this also extends to block variants of these smoothers grouping degrees of freedom locally by face or by cell.
%
\section{Convergence analysis}\label{SEC:analysis}
%
We are proving the missing two assumptions of the multigrid error analysis. We begin with a subsection which derives a fundamental theorem on the energy stability of the injection  operators based on the abstract assumptions. Assumption~\eqref{EQ:precond2} is an immediate consequence of these bounds. In the second part of this section, we prove assumption~~\eqref{EQ:precond1}.
%
\subsection{Energy stability of the injection and proof of~\eqref{EQ:precond2}}
%
We begin this part by proving the boundedness of $\injectionOp_\level$ with respect to several measures. In particular, we show energy stability of the injection operator and the ``Ritz projection''.
\begin{lemma}\label{TH:conv_result}
 Assuming \eqref{EQ:IA1}, \eqref{EQ:IA2}, \eqref{EQ:LS1}--\eqref{EQ:LS4}, we have for all $\lambda \in \skeletalSpace_{\level - 1}$
 \begin{align}
  \| \localQ_\level \injectionOp_\level \lambda \|_0 ~\lesssim~ & \| \localQ_{\level - 1} \lambda \|_0, \\
  \| \localU_{\level - 1} \lambda - \localU_\level \injectionOp_\level \lambda \|_0 ~\lesssim~ & h_{\level - 1} \| \localQ_{\level - 1} \lambda \|_0 \lesssim h_\level \| \localQ_{\level - 1} \lambda \|_0.
 \end{align}
 If additionally $\tau_\level h_\level \lesssim 1$, we have
 \begin{align}
  a_\level ( \injectionOp_\level \lambda, \injectionOp_\level \lambda ) & \lesssim a_{\level - 1} (\lambda, \lambda) && \forall \lambda \in \skeletalSpace_{\level - 1},\\
  a_{\level - 1} (\projectionOp_{\level - 1} \lambda, \projectionOp_{\level - 1} \lambda) & \lesssim a_{\level} (\lambda, \lambda) && \forall \lambda \in \skeletalSpace_\level.
 \end{align}
\end{lemma}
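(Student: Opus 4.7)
The plan is to reduce all four bounds to a single auxiliary estimate controlling $\lambda$ on level $\level-1$ by a conforming piecewise linear function, using the local-solver consistency~\eqref{EQ:LS4} and the trace identity~\eqref{EQ:IA2} to annihilate the linear part on both levels simultaneously. The cornerstone is to pick some $w \in \linElementSpace_{\level-1}$, taken as an Oswald-type average of $\localU_{\level-1} \lambda$ that vanishes at nodes on $\partial\Omega$, and to prove
\begin{equation*}
\| \lambda - \gamma_{\level-1} w \|_{\level-1} \lesssim h_{\level-1} \| \localQ_{\level-1} \lambda \|_0.
\end{equation*}
I would obtain this by a triangle inequality with $\localU_{\level-1} \lambda$ in between: the term $\| \lambda - \localU_{\level-1} \lambda \|_{\level-1}$ is controlled directly by~\eqref{EQ:LS1}; the term $\| \localU_{\level-1} \lambda - \gamma_{\level-1} w \|_{\level-1}$ is reduced via a discrete trace inequality to $\|\localU_{\level-1}\lambda - w\|_0 + h_{\level-1} \|\nabla(\localU_{\level-1}\lambda - w)\|_0$, and both are bounded by the Oswald-type approximation properties, together with $\|\nabla \localU_{\level-1} \lambda\|_0 \lesssim \|\localQ_{\level-1}\lambda\|_0$ (from~\eqref{EQ:LS3} and~\eqref{EQ:LS1}) and face-jump estimates on $\localU_{\level-1} \lambda$ that again go back to~\eqref{EQ:LS1}.

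With the auxiliary estimate in hand, I would derive the two $L^2$-type inequalities via a linearity trick. By~\eqref{EQ:IA2} and~\eqref{EQ:LS4}, $\localQ_\level \injectionOp_\level \gamma_{\level-1} w = -\nabla w = \localQ_{\level-1} \gamma_{\level-1} w$ and $\localU_\level \injectionOp_\level \gamma_{\level-1} w = w = \localU_{\level-1} \gamma_{\level-1} w$. Subtracting these identities from the quantities of interest gives
\begin{align*}
\localQ_\level \injectionOp_\level \lambda - \localQ_{\level-1} \lambda
&= \localQ_\level \injectionOp_\level(\lambda - \gamma_{\level-1} w) - \localQ_{\level-1}(\lambda - \gamma_{\level-1} w),\\
\localU_{\level-1} \lambda - \localU_\level \injectionOp_\level \lambda
&= \localU_{\level-1}(\lambda - \gamma_{\level-1} w) - \localU_\level \injectionOp_\level(\lambda - \gamma_{\level-1} w).
\end{align*}
Bounding each right-hand side with~\eqref{EQ:LS2}, \eqref{EQ:IA1}, and the refinement condition~\eqref{EQ:cref}, and inserting the auxiliary estimate, yields both the first claim (after absorbing $\|\localQ_{\level-1}\lambda\|_0$ via the triangle inequality) and the second.

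The last two claims follow with minor algebra. For the third, I would expand $a_\level(\injectionOp_\level\lambda,\injectionOp_\level\lambda) = \|\localQ_\level\injectionOp_\level\lambda\|_0^2 + \sum_\elem \tau_\level \|\localU_\level\injectionOp_\level\lambda - \injectionOp_\level\lambda\|_{L^2(\partial\elem)}^2$. The first summand is handled by the first claim. For the second, \eqref{EQ:LS1} yields $\|\localU_\level\injectionOp_\level\lambda - \injectionOp_\level\lambda\|_\level \lesssim h_\level\|\localQ_\level\injectionOp_\level\lambda\|_0$; the scaling of $\|\cdot\|_\level^2$ as $h_\level$ times the cumulative face $L^2$ norm squared, combined with $\tau_\level h_\level\lesssim 1$, shows this term is $\lesssim \|\localQ_\level\injectionOp_\level\lambda\|_0^2 \le a_{\level-1}(\lambda,\lambda)$. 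The fourth claim is a duality consequence of the third: from~\eqref{EQ:projection_definition} and Cauchy--Schwarz in $a_\level$,
\begin{equation*}
a_{\level-1}(\projectionOp_{\level-1}\lambda,\projectionOp_{\level-1}\lambda) = a_\level(\lambda,\injectionOp_\level\projectionOp_{\level-1}\lambda) \le \|\lambda\|_{a_\level} \|\injectionOp_\level\projectionOp_{\level-1}\lambda\|_{a_\level},
\end{equation*}
and applying the third claim to $\projectionOp_{\level-1}\lambda$ finishes the argument.

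I expect the main obstacle to be the auxiliary estimate, and specifically the construction of an Oswald-type averaging operator from the (possibly higher-order) discontinuous space containing $\localU_{\level-1}\lambda$ into $\linElementSpace_{\level-1}$ that respects the homogeneous Dirichlet boundary condition. The boundary handling contributes terms of the form $\|\localU_{\level-1}\lambda\|_{L^2(\partial\Omega)}$ that must be controlled by~\eqref{EQ:LS1} together with $\lambda|_{\partial\Omega}=0$; once this and the face-jump estimates are cleanly assembled, the rest of the proof is essentially algebraic manipulation of the stated assumptions.
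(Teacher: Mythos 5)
Your proposal is correct and follows essentially the same route as the paper: the paper's Lemmas \ref{LEM:avg_bound_by_localQ} and \ref{LEM:diff_bound_by_q} are exactly your auxiliary estimate for the Oswald-type average $w=\averagingOp_{\level-1}\localU_{\level-1}\lambda$ (proved there via jump estimates traced back to \eqref{EQ:LS1} and the gradient bound from \eqref{EQ:LS3}), and the remaining steps — annihilating the linear part on both levels via \eqref{EQ:LS4}/\eqref{EQ:IA2}, then invoking \eqref{EQ:LS2}, \eqref{EQ:IA1}, \eqref{EQ:cref}, the expansion of $a_\level$ with \eqref{EQ:LS1} and $\tau_\level h_\level\lesssim 1$, and the Cauchy--Schwarz duality for $\projectionOp_{\level-1}$ — match the paper's proof up to a cosmetic reorganization of the triangle inequalities.
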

In order to prove Lemma \ref{TH:conv_result} we need some preliminaries, which are the subjects of the following lemmas. They make use of the averaging linear interpolation
\begin{gather}
  \averagingOp_\level\colon\discElementSpace_\level \to \linElementSpace_\level,
\end{gather}
defined by the canonical interpolation operator $\linearInterpolation$ into linear finite elements and averaging in the vertices $\vec x$ of the cell $\elem_1$, namely
\begin{equation}
  \left[\averagingOp_\level u\right] (\vec x)
  = \frac1{n_{\vec x}}\sum_{i=1}^{n_{\vec x}} u_{|\elem_i}(\vec x).
\end{equation}
Here, $n_{\vec x}$ is the number of elements meeting in vertex $\vec x$ and $u_{|\elem_i}$ is the restriction of a function $u\in \discElementSpace_\level$ to cell $\elem_i$, which is single valued at $\vec x$. For $\vec x\in \partial\Omega$, we let $\left[\averagingOp_\level u\right] (\vec x) = 0$.
\begin{lemma}\label{LEM:avg_bound_by_localQ}
 If \eqref{EQ:LS1} and \eqref{EQ:LS3} hold, then for any $\level$ there holds
 \begin{align}
  | \averagingOp_\level \localU_\level \lambda |_{1} ~\lesssim~ & \| \localQ_\level \lambda \|_0, && \forall \lambda \in \skeletalSpace_\level,\label{EQ:avg_stability} \\
  \| \localU_\level \lambda - \averagingOp_\level \localU_\level \lambda\|_\level ~\lesssim~ & h_\level  \| \localQ_\level \lambda \|_0 && \forall \lambda \in \skeletalSpace_\level.\label{EQ:avg_error}
 \end{align}
\end{lemma}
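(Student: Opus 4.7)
\medskip
\noindent\textbf{Proof proposal.} The plan is to control both quantities via the classical Karakashian--Pasciak-type estimate for averaging interpolation, which bounds the difference between a discontinuous piecewise polynomial and its continuous $P_1$-average by the jumps on the skeleton. Concretely, for any $v\in\discElementSpace_\level$ the standard result
\begin{gather*}
    \sum_{\elem\in\mesh_\level}\Bigl(h_\level^{-2}\|v-\averagingOp_\level v\|_{0,\elem}^2 + |v-\averagingOp_\level v|_{1,\elem}^2\Bigr)
    \;\lesssim\; h_\level^{-1}\sum_{\face\in\faceSet_\level}\|\jump{v}\|_{L^2(\face)}^2
\end{gather*}
holds on shape-regular meshes (including Dirichlet contributions at boundary faces). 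I would quote this as a preliminary fact and then specialize it to $v=\localU_\level\lambda$.

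The first key reduction is to bound the jumps of $\localU_\level\lambda$. Since $\lambda$ is single-valued on every face, the triangle inequality yields $\|\jump{\localU_\level\lambda}\|_{L^2(\face)} \lesssim \|\localU_\level\lambda - \lambda\|_{L^2(\face \cap \partial\elem_+)}+\|\localU_\level\lambda-\lambda\|_{L^2(\face\cap\partial\elem_-)}$, and at Dirichlet faces $\lambda=0$ plays the same role. Summing over faces and using the scaling $\|\cdot\|_\level^2\cong h_\level\sum_\face\|\cdot\|_{L^2(\face)}^2$, I obtain
\begin{gather*}
    h_\level^{-1}\sum_{\face\in\faceSet_\level}\|\jump{\localU_\level\lambda}\|_{L^2(\face)}^2
    \;\lesssim\; h_\level^{-2}\|\localU_\level\lambda-\lambda\|_\level^2
    \;\lesssim\; \|\localQ_\level\lambda\|_0^2,
\end{gather*}
where the last step invokes hypothesis~\eqref{EQ:LS1}. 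Combined with the averaging estimate above, this already gives $|\localU_\level\lambda-\averagingOp_\level\localU_\level\lambda|_{1,\mesh}\lesssim\|\localQ_\level\lambda\|_0$ and $\|\localU_\level\lambda-\averagingOp_\level\localU_\level\lambda\|_0\lesssim h_\level\|\localQ_\level\lambda\|_0$.

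For~\eqref{EQ:avg_stability}, a triangle inequality gives $|\averagingOp_\level\localU_\level\lambda|_1 \le |\localU_\level\lambda|_{1,\mesh} + |\localU_\level\lambda-\averagingOp_\level\localU_\level\lambda|_{1,\mesh}$. The second term is already controlled; for the first, I combine~\eqref{EQ:LS3} and~\eqref{EQ:LS1} to deduce
\begin{gather*}
    \|\nabla\localU_\level\lambda\|_{0,\mesh}
    \le \|\localQ_\level\lambda\|_0 + \|\localQ_\level\lambda+\nabla\localU_\level\lambda\|_0
    \lesssim \|\localQ_\level\lambda\|_0 + h_\level^{-1}\|\localU_\level\lambda-\lambda\|_\level
    \lesssim \|\localQ_\level\lambda\|_0,
\end{gather*}
which closes the first inequality.

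For~\eqref{EQ:avg_error}, I pass from the bulk estimates to the skeletal norm by a discrete trace inequality: for the piecewise polynomial $w:=\localU_\level\lambda-\averagingOp_\level\localU_\level\lambda$ on each cell $\elem$ one has $\|w\|_{L^2(\partial\elem)}^2\lesssim h_\level^{-1}\|w\|_{0,\elem}^2+h_\level|w|_{1,\elem}^2$. Multiplying by $h_\level$ and summing over $\elem$ yields
\begin{gather*}
    \|w\|_\level^2 \lesssim \|w\|_0^2 + h_\level^2\,|w|_{1,\mesh}^2
    \lesssim h_\level^2\|\localQ_\level\lambda\|_0^2,
\end{gather*}
which is exactly~\eqref{EQ:avg_error}. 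The main obstacle I anticipate is stating the averaging-operator inequality cleanly enough that the Dirichlet nodes (where $\averagingOp_\level$ sets the value to zero) and the non-matching pattern at vertices are both covered; once that is granted, everything else is a sequence of trace and scaling arguments already licensed by~\eqref{EQ:LS1}--\eqref{EQ:LS3}.
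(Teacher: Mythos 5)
Your overall strategy coincides with the paper's: bound the averaging error by the face jumps of $\localU_\level\lambda$, convert jumps into $\|\localU_\level\lambda-\lambda\|_\level$ using the single-valuedness of $\lambda$ and \eqref{EQ:LS1}, and control the broken $H^1$ seminorm of $\localU_\level\lambda$ by combining \eqref{EQ:LS3} with \eqref{EQ:LS1}. You are in fact more explicit than the paper on one point: the paper's displayed bound for $\localU_\level\lambda-\averagingOp_\level\localU_\level\lambda$ is in the bulk $L^2$ norm, while \eqref{EQ:avg_error} is stated in $\|\cdot\|_\level$; your discrete trace inequality supplies exactly the missing conversion.

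There is, however, one genuine inaccuracy in your preliminary fact. The operator $\averagingOp_\level$ maps $\discElementSpace_\level$ into the \emph{piecewise linear} space $\linElementSpace_\level$, not into a continuous space of the same degree $p$. Consequently the Karakashian--Pasciak-type estimate you quote,
\begin{gather*}
    \sum_{\elem\in\mesh_\level}\Bigl(h_\level^{-2}\|v-\averagingOp_\level v\|_{0,\elem}^2 + |v-\averagingOp_\level v|_{1,\elem}^2\Bigr)
    \;\lesssim\; h_\level^{-1}\sum_{\face\in\faceSet_\level}\|\jump{v}\|_{L^2(\face)}^2,
\end{gather*}
is false for $p\ge 2$: take $v\in\discElementSpace_\level\cap H^1_0(\Omega)$ a globally continuous quadratic, so the right-hand side vanishes while $v-\averagingOp_\level v=v-\linearInterpolation v\neq 0$. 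The correct statement, which is what the paper derives by hand through the vertex-averaging identity and face scaling, carries an additional broken-seminorm term on the right, i.e.\ $\|v-\averagingOp_\level v\|_0^2\lesssim h_\level^2|v|_{1,\mesh_\level}^2+\|\jump{v}\|_\level^2$ and the analogous $H^1$ bound. Your argument survives this correction without change, because you already establish $|\localU_\level\lambda|_{1,\mesh_\level}\lesssim\|\localQ_\level\lambda\|_0$ from \eqref{EQ:LS3} and \eqref{EQ:LS1}, so the extra term is absorbed; but as written the quoted "standard result" is not available in the generality you invoke it, and the proof should either cite the version with the seminorm term or reproduce the paper's scaling derivation.
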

\begin{proof}
By standard scaling arguments, we have for the canonical interpolation operator $\linearInterpolation$ interpolating a polynomial $u$ on any cell $\elem$
 \begin{gather}
   \label{EQ:average-1}
    \begin{split}
    h_T^{-2}\| \linearInterpolation u\|_{0,\elem}^2  +  \left|\linearInterpolation u\right|_{1,\elem}^2
     &\lesssim h_T^{-2} |\elem| \sum_{\vec x} |u(\vec x)|^2,
     \\
     h_T^{-2}\| u-\linearInterpolation u\|_{0,\elem}^2 + \left|\linearInterpolation u\right|_{1,\elem}^2
     &\lesssim |u|_{1,\elem}^2.
    \end{split}
 \end{gather}
 A simple computation yields
 \begin{gather}
     \label{EQ:average-2}
     \left[\averagingOp_\level u\right]_{|\elem_1} (\vec x)
  = u_{|\elem_1} (\vec x) + \frac1{n_{\vec x}}\sum_{i=2}^{n_{\vec x}}
  \bigl(u_{|\elem_i}(\vec x)-u_{|\elem_1}(\vec x)\bigr).
 \end{gather}
 Using the fact that $u_{|\elem}$ is polynomial for any $\elem$ and a standard scaling argument, we obtain for any two cells $\elem_i$ and $\elem_j$ sharing an edge
 \begin{gather}
   \label{EQ:average-3}
     \bigl(u_{|\elem_i}(\vec x)-u_{|\elem_j}(\vec x)\bigr)^2
     \lesssim \frac1{|\face|} \int_{\face} \bigl(u_{|\elem_i}-u_{|\elem_j}\bigr)^2.
 \end{gather}
 If two cells sharing the vertex $\vec x$ do not share a face, they are connected by a chain of less than $n_{\vec x}$ cells where each is sharing a face with the next.
 
 At this point, we use the assumption that our meshes are quasi-uniform and form a shape-regular family. As a consequence, $n_{\vec x}$ is uniformly bounded and $|\elem|/|\face| \sim h_\level$. We sum up and use estimate \eqref{EQ:average-2} and \eqref{EQ:average-3} to obtain
 \begin{align*}
     \|\averagingOp_\level u-\linearInterpolation u\|_0^2 \lesssim 
     \sum_{\elem \in \mesh_\level}\sum_{\vec x} |\elem| |\averagingOp_\level u(\vec x)-u(\vec x)|^2 \lesssim \| \jump{u} \|_\level^2,
 \end{align*}
which together with triangle inequality and \eqref{EQ:average-1} indicates
 \begin{gather}
     \|\averagingOp_\level u- u\|_0^2 \lesssim h_{\level}^2  \left|u\right|^2_{1,\mesh_\level}+\| \jump{u} \|_\level^2.
 \end{gather}
  Entering $u = \localU_\level \lambda$ and adding $\lambda-\lambda$ into the second term yields
 \begin{align*}
     \|\averagingOp_\level \localU_\level \lambda- \localU_\level \lambda\|_0^2 \lesssim h_{\level}^2  \left|\localU_\level \lambda\right|^2_{1,\mesh_\level}+\left\|\localU_\level\lambda - \lambda\right\|_\level^2 \lesssim  h_\level^2  \| \localQ_\level \lambda \|_0^2,
 \end{align*}
 where we use \eqref{EQ:LS1} and \eqref{EQ:LS3} in the second inequality. Similarly, we can get
 \begin{align*}
   \left|\averagingOp_\level \localU_\level \lambda- \localU_\level \lambda\right|_{1,\mesh_\level} \lesssim   \| \localQ_\level \lambda \|_0.
 \end{align*}
 Using \eqref{EQ:LS1} and \eqref{EQ:LS3} again, \eqref{EQ:avg_stability} is concluded.
\end{proof}
\begin{lemma}\label{LEM:diff_bound_by_q}
 If \eqref{EQ:LS1} and \eqref{EQ:LS3} hold, then
 \begin{equation*}
  \| \lambda - \gamma_\level \averagingOp_\level \localU_\level \lambda \|_\level \lesssim h_\level \| \localQ_\level \lambda \|_0 \qquad \forall \lambda \in \skeletalSpace_\level.
 \end{equation*}
\end{lemma}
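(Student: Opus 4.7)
The plan is to reduce the claim to the triangle inequality together with assumption~\eqref{EQ:LS1} and estimate~\eqref{EQ:avg_error} from Lemma~\ref{LEM:avg_bound_by_localQ}, both of which deliver a right-hand side of the required form $h_\level \| \localQ_\level \lambda \|_0$.

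Concretely, I would insert $\localU_\level\lambda$ (viewed trace-wise, using the fact that $\langle \cdot,\cdot\rangle_\level$ is well defined for possibly two-valued $H^1(\mesh_\level)$ functions on the skeleton) and split
\begin{equation*}
 \lambda - \gamma_\level \averagingOp_\level \localU_\level \lambda
 = \bigl(\lambda - \localU_\level \lambda\bigr)
 + \bigl(\localU_\level \lambda - \gamma_\level \averagingOp_\level \localU_\level \lambda\bigr).
\end{equation*}
The triangle inequality in the norm $\|\cdot\|_\level$ then gives
\begin{equation*}
 \| \lambda - \gamma_\level \averagingOp_\level \localU_\level \lambda \|_\level
 \le \| \lambda - \localU_\level \lambda \|_\level + \| \localU_\level \lambda - \averagingOp_\level \localU_\level \lambda \|_\level .
\end{equation*}
Assumption~\eqref{EQ:LS1} controls the first summand by $h_\level \| \localQ_\level \lambda \|_0$. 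The second summand is precisely the quantity bounded in~\eqref{EQ:avg_error} of the previous lemma, which again yields the bound $h_\level \| \localQ_\level \lambda \|_0$. Adding the two estimates concludes the proof.

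There is no real obstacle; the only subtlety is the notational one of interpreting $\|\localU_\level\lambda-\averagingOp_\level\localU_\level\lambda\|_\level$ via the trace on both sides of each interior face, in accordance with the convention for $\langle u,\mu\rangle_\level$ established in the introduction. Since $\averagingOp_\level\localU_\level\lambda \in \linElementSpace_\level$ is continuous, this trace is single-valued for that summand and the application of $\gamma_\level$ can be absorbed directly, so that \eqref{EQ:avg_error} is applicable without modification.
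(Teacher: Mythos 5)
Your proof is correct and follows exactly the paper's argument: the same triangle-inequality splitting via $\localU_\level\lambda$, with \eqref{EQ:LS1} bounding the first term and \eqref{EQ:avg_error} from Lemma~\ref{LEM:avg_bound_by_localQ} bounding the second. The remark on the skeleton norm being well defined for the two-valued trace is a correct reading of the convention the paper relies on implicitly.
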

\begin{proof}
 First, we use triangle inequality to split up the difference:
 \begin{equation}
  \| \lambda - \gamma_\level \averagingOp_\level \localU_\level \lambda \|_\level ~\le~ \underbrace{ \| \lambda - \localU_\level \lambda \|_\level }_{ =: \Xi_1 } + \underbrace{ \| \localU_\level \lambda - \gamma_\level \averagingOp_\level \localU_\level \lambda \|_\level }_{ =: \Xi_2 }
 \end{equation}
 Now, we have to bound the individual summands:
 \begin{align}
  \Xi_1 ~\lesssim~ & \| \lambda - \localU_\level \lambda \|_\level \overset{\eqref{EQ:LS1}}\lesssim h_{\level} \| \localQ_\level \lambda \|_0,\\
  \Xi_2 ~\lesssim~ & h_{\level} \| \localQ_\level \lambda \|_0,
 \end{align}
 where $\Xi_2$ is estimated by the means of Lemma \ref{LEM:avg_bound_by_localQ}.
\end{proof}
Using these results, we can prove Lemma \ref{TH:conv_result}.
\begin{proof}[Proof of Lemma \ref{TH:conv_result}]
 We start with proving the first inequality by
 \begin{align*}
  \| \localQ_\level \injectionOp_\level \lambda \|_0 ~\le~ & \| \localQ_\level \injectionOp_\level \lambda + \nabla \averagingOp_{\level - 1} \localU_{\level - 1} \lambda \|_0 + \| \nabla \averagingOp_{\level - 1} \localU_{\level - 1} \lambda \|_0\\
  \lesssim & \| \localQ_\level \injectionOp_\level \lambda - \localQ_\level \gamma_\level \averagingOp_{\level - 1} \localU_{\level - 1} \lambda \|_0 + | \averagingOp_{\level - 1} \localU_{\level - 1} \lambda |_{1},
 \end{align*}
 where the second estimate uses \eqref{EQ:LS4} to replace the gradient by $\localQ_\level \gamma_\level$. Now, by \eqref{EQ:LS2} we can bound the error in $\localQ_\level$ gaining a negative power of $h_\level$, and by \eqref{EQ:IA2} we can insert the injection operator into the subtrahend:
 \begin{align*}
  \| \localQ_\level \injectionOp_\level \lambda \|_0 ~\lesssim~ & h_\level^{-1} \| \injectionOp_\level \lambda - \injectionOp_\level \gamma_{\level - 1} \averagingOp_{\level - 1} \localU_{\level - 1} \lambda \|_\level + | \averagingOp_{\level - 1} \localU_{\level - 1} \lambda |_{1}.
 \end{align*}
  Next, we can get rid of the injection operators due to their $L^2$ stability \eqref{EQ:IA1}. The last step in this chain of inequalities consists in the application of Lemmas \ref{LEM:avg_bound_by_localQ} and \ref{LEM:diff_bound_by_q} to bound the remaining summands:
 \begin{equation}
         \| \localQ_\level \injectionOp_\level \lambda \|_0
         \lesssim h_\level^{-1} \| \lambda - \gamma_{\level - 1} \averagingOp_{\level - 1} \localU_{\level - 1} \lambda \|_{\level-1} + | \averagingOp_{\level - 1} \localU_{\level - 1} \lambda |_{1}\lesssim  \| \localQ_{\level - 1} \lambda \|_0.
 \end{equation}
 
 With the first inequality done, we can also prove the second inequality splitting it up via
 \begin{equation}
  \| \localU_{\level - 1} \lambda - \localU_\level \injectionOp_\level \lambda \|_0
  \le \underbrace{ \| \localU_{\level - 1} \lambda - \averagingOp_{\level - 1} \localU_{\level  - 1} \lambda \|_0 }_{ =: \Xi_1 }
  + \underbrace{ \| \averagingOp_{\level - 1} \localU_{\level  - 1} \lambda - \localU_\level \injectionOp_\level \lambda \|_0 }_{ =: \Xi_2 }
 \end{equation}
 and bounding its individual terms by
 \begin{align*}
  \Xi_1 = & \| \localU_{\level - 1} \lambda - \localU_{\level - 1} \gamma_{\level - 1} \averagingOp_{\level - 1} \localU_{\level  - 1} \lambda \|_0 \\
  \lesssim & \| \lambda - \gamma_{\level - 1} \averagingOp_{\level - 1} \localU_{\level  - 1} \lambda \|_{\level - 1} \lesssim h_{\level} \| \localQ_{\level - 1} \lambda \|_0,
 \end{align*}
  where the identity is assumption \eqref{EQ:LS4}, and the first estimate is the stability of $\localU_{\level-1}$ (i.e. assumption \eqref{EQ:LS2}). The last inequality would be Lemma \ref{LEM:diff_bound_by_q} if we were on level $\level$. Since this is not true, we additionally need to estimate $h_{\level-1}$ by $h_\level$ using \eqref{EQ:cref}. Similar to $\Xi_1$, we use assumption \eqref{EQ:LS4} to rewrite it and \eqref{EQ:LS2} to get rid of $\localU_\level$. Afterwards, however we use the identity property of the injection operator \eqref{EQ:IA2} to insert it into the subtrahend:
 \begin{align*}
  \Xi_2 = & \| \localU_\level \injectionOp_\level \lambda - \localU_\level \gamma_\level \averagingOp_{\level - 1} \localU_{\level  - 1} \lambda \|_0 \\
  \lesssim & \| \injectionOp_\level \lambda - \injectionOp_\level \gamma_{\level - 1} \averagingOp_{\level - 1} \localU_{\level  - 1} \lambda \|_\level\\
  \lesssim & \| \lambda - \gamma_{\level - 1} \averagingOp_{\level - 1} \localU_{\level  - 1} \lambda \|_{\level - 1},
 \end{align*}
 where the stability of the injection operator \eqref{EQ:IA1}, on the other side, allowed us to remove both (the original and the previously inserted) injection operators. As for $\Xi_2$, the remaining steps consist of the application of Lemma \ref{LEM:diff_bound_by_q} and \eqref{EQ:cref}.
 \begin{align*}
  \Xi_2 \lesssim & h_{\level - 1} \| \localQ_{\level - 1} \lambda \|_0 \lesssim h_\level \| \localQ_{\level - 1} \lambda \|_0.
 \end{align*}
 The third inequality is a direct consequence of \eqref{EQ:LS1} (bounding the term $\tfrac{\tau_\level}{h_\level} \| \localU_\level \injectionOp_\level \lambda - \injectionOp_\level \lambda \|_\level^2$) and Lemma \ref{TH:conv_result}:
 \begin{align}
  a_\level (\injectionOp_\level \lambda, \injectionOp_\level \lambda) ~=~ & \| \localQ_\level \injectionOp_\level \lambda \|_0^2 + \frac{\tau_\level}{h_\level} \| \localU_\level \injectionOp_\level \lambda - \injectionOp_\level \lambda \|_\level^2 \lesssim \| \localQ_\level \injectionOp_\level \lambda \|^2_0 \notag\\
  \lesssim & \| \localQ_{\level - 1} \lambda \|_0^2 \le a_{\level - 1}(\lambda, \lambda),
 \end{align}
 while the fourth inequality follows from
 \begin{equation}
  \label{EQ:stability-P}
  \| \projectionOp_{\level-1} \lambda \|^2_{a_{\level-1}} = a_\level(\lambda, \injectionOp_\level \projectionOp_{\level-1} \lambda) \le \| \lambda \|_{a_\level} \| \injectionOp_\level \projectionOp_{\level-1} \lambda \|_{a_\level} \lesssim \| \lambda \|_{a_\level} \| \projectionOp_{\level-1} \lambda \|_{a_{\level-1}},
 \end{equation}
 which itself is a consequence of the Cauchy--Schwarz inequality and the third inequality.
\end{proof}
\begin{lemma}
 Under the assumptions of Lemma \ref{TH:conv_result}, \eqref{EQ:precond2} holds.
\end{lemma}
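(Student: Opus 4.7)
The plan is to derive \eqref{EQ:precond2} directly from the two energy-stability bounds already established in Lemma \ref{TH:conv_result}, without needing any further machinery. The triangle inequality in the $a_\level$-norm gives
\begin{equation*}
  \| \lambda - \injectionOp_\level \projectionOp_{\level-1} \lambda \|_{a_\level}
  \le \| \lambda \|_{a_\level} + \| \injectionOp_\level \projectionOp_{\level-1} \lambda \|_{a_\level},
\end{equation*}
so it suffices to bound the second summand on the right by a constant multiple of $\| \lambda \|_{a_\level}$.

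First I would apply the third inequality of Lemma \ref{TH:conv_result} with the coarse-grid function $\mu = \projectionOp_{\level-1} \lambda \in \skeletalSpace_{\level-1}$, obtaining
\begin{equation*}
  \| \injectionOp_\level \projectionOp_{\level-1} \lambda \|_{a_\level}^2
  = a_\level(\injectionOp_\level \mu, \injectionOp_\level \mu)
  \lesssim a_{\level-1}(\mu, \mu)
  = \| \projectionOp_{\level-1} \lambda \|_{a_{\level-1}}^2.
\end{equation*}
Then I would invoke the fourth inequality of Lemma \ref{TH:conv_result}, i.e.\ the stability of the Ritz quasi-projection \eqref{EQ:stability-P}, to conclude
\begin{equation*}
  \| \projectionOp_{\level-1} \lambda \|_{a_{\level-1}} \lesssim \| \lambda \|_{a_\level}.
\end{equation*}
Chaining these two bounds gives $\| \injectionOp_\level \projectionOp_{\level-1} \lambda \|_{a_\level} \lesssim \| \lambda \|_{a_\level}$, and combined with the triangle inequality above this yields \eqref{EQ:precond2} with a constant $C_2$ independent of $\level$.

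I expect no real obstacle here: the heavy lifting was done in Lemma \ref{TH:conv_result}, and the present statement is essentially a short corollary. The only mild care needed is to note that both energy-stability inequalities of Lemma \ref{TH:conv_result} rely on the hypothesis $\tau_\level h_\level \lesssim 1$, which has to be carried over into the hypotheses of this lemma (it is part of ``the assumptions of Lemma~\ref{TH:conv_result}'').
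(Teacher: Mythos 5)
Your proof is correct and relies on exactly the same two ingredients as the paper's: the energy stability $a_\level(\injectionOp_\level\mu,\injectionOp_\level\mu)\lesssim a_{\level-1}(\mu,\mu)$ applied to $\mu=\projectionOp_{\level-1}\lambda$, and the stability bound \eqref{EQ:stability-P} for $\projectionOp_{\level-1}$, both from Lemma~\ref{TH:conv_result}. The only cosmetic difference is that the paper expands $a_\level(\lambda-\injectionOp_\level\projectionOp_{\level-1}\lambda,\lambda-\injectionOp_\level\projectionOp_{\level-1}\lambda)$ and drops the nonpositive cross term $-2a_{\level-1}(\projectionOp_{\level-1}\lambda,\projectionOp_{\level-1}\lambda)$, whereas you use the triangle inequality in the $a_\level$-norm; both yield a level-independent $C_2$.
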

\begin{proof}
 \begin{align}
  a_\level ( \lambda - & \injectionOp_{\level-1} \projectionOp_\level \lambda, \lambda - \injectionOp_\level \projectionOp_{\level-1} \lambda ) \\
 = & a_\level ( \lambda, \lambda ) - 2 a_\level ( \lambda, \injectionOp_\level \projectionOp_{\level-1} \lambda ) + a_\level ( \injectionOp_\level \projectionOp_{\level-1} \lambda, \injectionOp_\level \projectionOp_{\level-1} \lambda ) \notag\\
 \le & a_\level ( \lambda, \lambda ) \underbrace{ - 2 a_{\level - 1} ( \projectionOp_{\level-1} \lambda,  \projectionOp_{\level-1} \lambda ) }_{ \le 0 } + C \underbrace{ a_{\level-1} ( \projectionOp_{\level-1} \lambda, \projectionOp_{\level-1} \lambda ) }_{ \lesssim \| \lambda \|^2_{a_\level} },\notag
 \end{align}
\end{proof}

\subsection{Proof of \eqref{EQ:precond1}}
%
It remains to show that assumption \eqref{EQ:precond1} holds true, which is the statement of Theorem \ref{TH:A1_proof} below. Its proof follows similar lines as the one for standard multigrid methods, but some adjustments are necessary which make it technically more involved. A typical step in multigrid proofs consists in considering $A_\level \lambda$ as a function in $L^2(\Omega)$ and using it as right hand side in an auxiliary problem.
In the context of HDG methods, $A_\level\lambda$ is only a Borel measure on the skeleton. Since the mapping from $f$ to the ``Neumann'' trace in~\eqref{EQ:hdg_f} cannot be guaranteed to be surjective---simple counting of dimensions in low order cases shows that in some cases it even cannot be surjective---the auxiliary solution $\tilde u$ must be defined in an inconsistent way, introducing an additional error.

Then, $\lambda_\level$ and $\lambda_{\level-1}$ are interpreted as Galerkin approximations to the solution $\tilde u$ of this auxiliary problem, thus obtaining an error estimate through Galerkin orthogonality. Here, we have to deviate in two respects. Here, we only have quasi-orthogonality from Lemma \ref{LEM:quasi_orth}.

We first address the construction of a right hand side in $L^2(\Omega)$ by the lifting $\liftingOp_\level\colon\skeletalSpace_{\level}\to \contThreeElementSpace{\level} \subset L^2(\Omega)$, which was inspired by~\cite{TanPhD} and coincides with~\cite[Lem.A.3]{GiraultR1986} for the two dimensional case and~\cite[Def.5.46]{Monk2003}  for three dimensional. An extremely similar (upto the need of averaging node values) operator has been used to investigate multigrid convergence of EDG methods in \cite{LuRK2021}. For each $\lambda\in\skeletalSpace_\level$ it is defined by the following conditions:
\begin{subequations}
\label{eq:define-s}
\begin{align}
 (\liftingOp_\level \lambda , v)_\elem &= (\localU_\level \lambda, v)_\elem && \forall v \in \polynomials_p(\elem), \, \forall \elem \in \mesh_\level,\\
 \langle \liftingOp_\level \lambda, \eta \rangle_\face &= \langle \lambda, \eta \rangle_\face && \forall \eta \in \polynomials_{p+1}(\face), \, \forall \face \in \faceSet_\level\\
 \liftingOp_\level \lambda(\vec a) &= \avg{ \lambda(\vec a)} && \forall \vec a \text{ is a vertex in } \mesh_\level, \, \vec a \not\in \partial \Omega,\\
 \liftingOp_\level \lambda(\vec a) &= 0 && \forall \vec a \text{ is a vertex in } \mesh_\level, \, \vec a \in \partial \Omega.
\end{align}
\end{subequations}
The degrees of freedom in equations~\eqref{eq:define-s} follow the standard geometrical decomposition of polynomial spaces on simplices, see for instance~\cite[Section 4]{ArnoldFalkWinther06acta}. Thus, it is well-defined and approximates $\localU_\level \lambda$, since it is its $L^2: \contThreeElementSpace{\level} \to \discElementSpace_\level $ projection (with respect to polynomials of degree at most $p$). Moreover, it approximates $\lambda$, since it is its $L^2$ projection, and it approximates the values in the vertices by attending their means. Its construction extends to the three-dimensional case by increasing the polynomial order by one, thus defining $\liftingOp_\level\colon\skeletalSpace_{\level}\to \contFourElementSpace{\level}$ and adding the conditions
\begin{gather}
    \langle \liftingOp_\level \lambda, \eta \rangle_\Gamma =  \langle \avg{\lambda}, \eta \rangle_\Gamma 
    \qquad \forall \eta \in \polynomials_{p+2}(\Gamma),
\end{gather}
for all edges $\Gamma$ of the tetrahedron. Here, $\avg{\lambda}$ is again the average taken over all cells adjacent to $\Gamma$. This construction extends to dimensions higher than three if so desired. Below, we will make use of the range of $\liftingOp_\level$, namely $\liftingOp_\level \skeletalSpace_\level$ and the fact that it is piecewise polynomial.

We summarize the properties of $\liftingOp_\level \lambda$:
\begin{lemma}[Properties of $\liftingOp_\level \lambda$]
 Under assumptions \eqref{EQ:LS1}--\eqref{EQ:LS4}, we have
 \begin{align}
  \| \liftingOp_\level \lambda \|_0 ~ \cong ~ & \| \lambda \|_\level && \forall \lambda \in \skeletalSpace_\level && \text{(norm equiv.)}\label{EQ:lift_equiv}\\
  \liftingOp_\level \gamma_\level w ~=~ & w && \forall w \in \linElementSpace_\level && \text{(lifting identity)}\label{EQ:lift_iden}\\
  |\liftingOp_\level \lambda|_{1} ~\lesssim~ & \| \localQ_\level \lambda \|_0 && \forall \lambda \in \skeletalSpace_\level &&  \text{(lifting bound)}\label{EQ:lift_bound}
 \end{align}
\end{lemma}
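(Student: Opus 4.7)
The lemma comprises three independent claims, and my plan is to establish each by combining the unisolvence of the defining conditions \eqref{eq:define-s} with the matching local solver property, proceeding in increasing order of difficulty.

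For the lifting identity \eqref{EQ:lift_iden}, I would simply check that $w$ itself satisfies every defining condition of $\liftingOp_\level \gamma_\level w$. The cell moment condition reduces to $(w, v)_\elem = (\localU_\level \gamma_\level w, v)_\elem$, which is immediate from \eqref{EQ:LS4}; the face moment condition is trivial since $w$ is single-valued on each face; and the vertex conditions hold because $w$ is continuous and vanishes on $\partial \Omega$. Since $p\ge 1$ implies $\linElementSpace_\level \subset \contThreeElementSpace{\level}$, uniqueness of the lifting then forces $\liftingOp_\level \gamma_\level w = w$.

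For the norm equivalence \eqref{EQ:lift_equiv}, I would pass to the reference cell, where the $L^2$ norm on a polynomial of fixed degree is equivalent to the Euclidean norm of its DOF vector, and then scale back. Bounding each DOF individually (by Cauchy--Schwarz on the cell and face moments, and by an inverse estimate on the adjacent face for the vertex averages) yields the cellwise bound
\[
\|\liftingOp_\level \lambda\|_{0,\elem}^2 \lesssim \|\localU_\level \lambda\|_{0,\elem}^2 + h_\level \|\lambda\|_{0,\partial \elem}^2.
\]
Summing and invoking \eqref{EQ:LS2} gives $\|\liftingOp_\level \lambda\|_0 \lesssim \|\lambda\|_\level$. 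For the reverse inequality, since $\lambda_{|\face} \in \polynomials_p(\face)\subset \polynomials_{p+1}(\face)$ is a legal test function in the face moment condition, one obtains $\|\lambda\|_{0,\face} \le \|\liftingOp_\level \lambda\|_{0,\face}$, after which a polynomial trace inequality converts the face norms into interior norms and closes the equivalence.

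For the lifting bound \eqref{EQ:lift_bound}, I would insert the averaging interpolant $\averagingOp_\level \localU_\level \lambda$ from Lemma \ref{LEM:avg_bound_by_localQ} and split
\[
|\liftingOp_\level \lambda|_1 \le |\liftingOp_\level \lambda - \averagingOp_\level \localU_\level \lambda|_1 + |\averagingOp_\level \localU_\level \lambda|_1.
\]
The second summand is bounded by $\|\localQ_\level \lambda\|_0$ via \eqref{EQ:avg_stability}. A cellwise inverse estimate reduces the first to $h_\level^{-1} \|\liftingOp_\level \lambda - \averagingOp_\level \localU_\level \lambda\|_0$, which I would handle by inserting $\localU_\level \lambda$: the summand $\|\localU_\level \lambda - \averagingOp_\level \localU_\level \lambda\|_0 \lesssim h_\level \|\localQ_\level \lambda\|_0$ comes from \eqref{EQ:avg_error}, while $\|\liftingOp_\level \lambda - \localU_\level \lambda\|_0$ is bounded by the same DOF-scaling as above, this time with vanishing cell moments thanks to the first condition of \eqref{eq:define-s}, yielding $\|\lambda - \localU_\level \lambda\|_\level \lesssim h_\level \|\localQ_\level \lambda\|_0$ by \eqref{EQ:LS1}.

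The main obstacle is the careful DOF bookkeeping for the norm equivalence and the $L^2$-bound on $\liftingOp_\level \lambda - \localU_\level \lambda$: the cell, face, and vertex conditions in \eqref{eq:define-s} scale with distinct powers of $h_\level$, and absorbing the zero-dimensional vertex contributions into the face data requires the right inverse inequality on each adjacent face. In three dimensions, the additional edge moments in the definition of $\liftingOp_\level$ are handled analogously and do not alter the structure of the argument.
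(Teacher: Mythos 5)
Your proof is correct, and for the lifting identity and the norm equivalence it follows essentially the same path as the paper (the identity by checking that $w$ satisfies all conditions of \eqref{eq:define-s} via \eqref{EQ:LS4} and unisolvence; the equivalence by the scaling/DOF argument plus \eqref{EQ:LS2}, which the paper dispatches in one sentence). The one place you genuinely diverge is the $L^2$ term in the lifting bound: after the inverse estimate you insert $\localU_\level\lambda$ and bound $\|\liftingOp_\level\lambda-\localU_\level\lambda\|_0$ by a second, ad hoc DOF-scaling estimate (with the vertex-averaging bookkeeping you flag as the main obstacle), then finish with \eqref{EQ:avg_error} and \eqref{EQ:LS1}. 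The paper instead observes that $\averagingOp_\level\localU_\level\lambda\in\linElementSpace_\level$, so the lifting identity gives $\averagingOp_\level\localU_\level\lambda=\liftingOp_\level\gamma_\level\averagingOp_\level\localU_\level\lambda$, whence by linearity and the already-proved norm equivalence
\begin{equation*}
\|\liftingOp_\level\lambda-\averagingOp_\level\localU_\level\lambda\|_0
=\|\liftingOp_\level(\lambda-\gamma_\level\averagingOp_\level\localU_\level\lambda)\|_0
\lesssim\|\lambda-\gamma_\level\averagingOp_\level\localU_\level\lambda\|_\level
\lesssim h_\level\|\localQ_\level\lambda\|_0,
\end{equation*}
the last step being Lemma \ref{LEM:diff_bound_by_q}. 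This reuse of the first two parts of the lemma buys a proof with no further reference-element computation, whereas your route carries the cost of a second scaling argument (the delicate part being that $\avg{\lambda(\vec a)}$ averages over all faces in the vertex patch, so controlling the vertex DOFs of $\liftingOp_\level\lambda-\localU_\level\lambda$ by $\|\lambda-\localU_\level\lambda\|_\level$ needs a chain/telescoping argument across the patch); on the other hand, your route makes the role of \eqref{EQ:LS1} explicit and does not need Lemma \ref{LEM:diff_bound_by_q}. Both are valid.
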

\begin{proof}
 By the standard scaling argument and using \eqref{EQ:LS2}, we immediately have norm equivalence.
 
 For all $w \in \linElementSpace_\level$, we have $\localU_\level \gamma_\level w = w$ from \eqref{EQ:LS4}. Thus, by definition of $\liftingOp_\level \gamma_\level w$, we get the lifting identity. Finally, using the lifting identity and inverse inequality yields
 \begin{align}
  | \liftingOp_\level \lambda |_{1} &\le |\liftingOp_\level \lambda - \averagingOp_\level \localU_\level \lambda |_{1} + | \averagingOp_\level \localU_\level \lambda |_{1}\\
  &\lesssim h^{-1}_\level \| \liftingOp_\level \lambda - \liftingOp_\level \gamma_\level \averagingOp_\level \localU_\level \lambda \|_0 + | \averagingOp_\level \localU_\level \lambda |_{1}\\
  &\lesssim h^{-1}_\level \| \lambda - \gamma_\level \averagingOp_\level \localU_\level \lambda \|_\level + | \averagingOp_\level \localU_\level \lambda |_{1},
 \end{align}
 where the last inequality uses norm equivalence~\eqref{EQ:lift_equiv}.
 Thus, we obtain the lifting bound if combined with Lemmas \ref{LEM:avg_bound_by_localQ} \& \ref{LEM:diff_bound_by_q}.
\end{proof}
We now use $\liftingOp_\level$ to construct the right hand side of the auxiliary problem. As $A_\level \lambda$ is in the dual of $\skeletalSpace_\level$, a natural condition is
\begin{equation}\label{EQ:scp_def}
   (f_\lambda, \liftingOp_\level \mu)
   = \langle A_\level \lambda, \mu \rangle
    = a_\level (\lambda, \mu)
   \qquad \forall \mu \in \skeletalSpace_\level.
\end{equation}
This problem is guaranteed to have a unique solution if we search for $f_\lambda$ in the space $\liftingOp_\level \skeletalSpace_\level$.
The construction avoids the question of surjectivity of $\liftingOp_\level$ and will be justified by the error estimates below.
Now, we are ready to define $\tilde u \in H^1_0(\Omega)$ as the unique solution of
\begin{equation}\label{EQ:Pois_app}
   (\nabla\tilde u,\nabla v) = (f_\lambda,v)
 \qquad \forall v\in H^1_0(\Omega).
\end{equation}
Furthermore, for any $\level=0,\dots,L$ we introduce its HDG approximation $\tilde \lambda_\level \in \skeletalSpace_\level$ with
\begin{equation}\label{EQ:tilde_lambda}
 a_\level (\tilde \lambda_\level, \mu) = (f_\lambda, \localU_\level \mu) \qquad \forall \mu \in \skeletalSpace_\level.
\end{equation}
Next, we prove the approximation result for $\tilde \lambda_\level$ which is a generalization of \cite[Lem.~4.6]{LuRK2020}.
\begin{lemma}\label{TH:extract_h_basis}
 Assume that \eqref{EQ:LS1}--\eqref{EQ:LS4} hold. Then, we have for all $\lambda \in \skeletalSpace_\level$
 \begin{equation}
  \label{EQ:bound_flambda}
  \| \lambda - \tilde \lambda_\level \|_{a_\level} \lesssim h_\level \| A_\level \lambda \|_\level, \qquad \| f_\lambda \|_0 \lesssim \| A_\level \lambda \|_\level.
 \end{equation}
\end{lemma}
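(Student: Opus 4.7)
The plan is to chase the definitions to obtain a Galerkin-type identity between $\lambda$ and $\tilde\lambda_\level$, and then exploit the defining $L^2$-projection property of the lifting $\liftingOp_\level$ to gain a factor of $h_\level$. Combining \eqref{EQ:scp_def} and \eqref{EQ:tilde_lambda}, I observe the identity
\begin{equation*}
a_\level(\lambda - \tilde\lambda_\level, \mu) = (f_\lambda, \liftingOp_\level \mu) - (f_\lambda, \localU_\level \mu) = (f_\lambda, \liftingOp_\level \mu - \localU_\level \mu) \qquad \forall \mu \in \skeletalSpace_\level.
\end{equation*}
Testing with $\mu = \lambda - \tilde\lambda_\level$ and applying Cauchy--Schwarz reduces the entire lemma to two ingredients: (i) the dual bound $\|f_\lambda\|_0 \lesssim \|A_\level \lambda\|_\level$, and (ii) the approximation estimate $\|\liftingOp_\level \mu - \localU_\level \mu\|_0 \lesssim h_\level \|\mu\|_{a_\level}$.

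Ingredient (i) is straightforward. Since by construction $f_\lambda$ lives in the finite-dimensional subspace $\liftingOp_\level \skeletalSpace_\level$, I write $f_\lambda = \liftingOp_\level \nu$ for a unique $\nu \in \skeletalSpace_\level$. Testing \eqref{EQ:scp_def} with $\mu = \nu$ gives $\|f_\lambda\|_0^2 = \langle A_\level \lambda, \nu \rangle_\level \le \|A_\level \lambda\|_\level \|\nu\|_\level$, and the norm equivalence \eqref{EQ:lift_equiv} provides $\|\nu\|_\level \cong \|\liftingOp_\level \nu\|_0 = \|f_\lambda\|_0$. Dividing yields the second claim of the lemma and provides the bound needed in (ii).

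The main obstacle is ingredient (ii), the sharp $h_\level$-factor. The defining property \eqref{eq:define-s} makes $\liftingOp_\level \mu - \localU_\level \mu$ $L^2(\elem)$-orthogonal to $\polynomials_p(\elem)$ on every cell. In all HDG variants covered by the analysis we have $V_\elem \subseteq \polynomials_p(\elem)$, so in particular $\localU_\level \mu \in \polynomials_p(\elem)$. Consequently, for any $v \in \polynomials_p(\elem)$,
\begin{equation*}
\|\liftingOp_\level \mu - \localU_\level \mu\|_{0,\elem}^2 = (\liftingOp_\level \mu - \localU_\level \mu, \liftingOp_\level \mu - v)_\elem \le \|\liftingOp_\level \mu - \localU_\level \mu\|_{0,\elem}\,\|\liftingOp_\level \mu - v\|_{0,\elem},
\end{equation*}
which identifies $\localU_\level \mu$ as the $L^2$-best approximation of $\liftingOp_\level \mu$ within $\polynomials_p(\elem)$. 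Because constants are already in $\polynomials_p(\elem)$, a standard Poincaré inequality shows that this best-approximation error is controlled by $h_\level |\liftingOp_\level \mu|_{1,\elem}$. Summing over cells and invoking the lifting bound \eqref{EQ:lift_bound} produces $\|\liftingOp_\level \mu - \localU_\level \mu\|_0 \lesssim h_\level |\liftingOp_\level \mu|_1 \lesssim h_\level \|\localQ_\level \mu\|_0 \le h_\level \|\mu\|_{a_\level}$. Plugging (i) and (ii) into the Galerkin identity and cancelling one factor of $\|\lambda - \tilde\lambda_\level\|_{a_\level}$ gives the first claim.

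The pitfall I want to avoid is the natural but wrong attempt to apply Poincaré directly to $\liftingOp_\level \mu - \localU_\level \mu$: that difference has zero mean on each cell (by orthogonality to constants), but the resulting estimate involves $|\localU_\level \mu|_{1,\mesh_\level}$, which is not bounded by $\|\mu\|_{a_\level}$ with the correct power of $h_\level$. The essential trick is to let the $\polynomials_p$-orthogonality do the work by comparing to the best polynomial approximation of $\liftingOp_\level \mu$ alone, so that only $|\liftingOp_\level \mu|_1$ appears, which is precisely what \eqref{EQ:lift_bound} controls.
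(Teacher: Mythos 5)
Your proposal is correct and follows essentially the same route as the paper: the same Galerkin-type identity $a_\level(\lambda-\tilde\lambda_\level,\mu)=(f_\lambda,\liftingOp_\level\mu-\localU_\level\mu)$, the same duality argument for $\|f_\lambda\|_0$ via $f_\lambda=\liftingOp_\level\nu$ and the norm equivalence \eqref{EQ:lift_equiv}, and the same key observation that $\localU_\level\mu=\discProj_\level\liftingOp_\level\mu$ so that the $L^2$-projection error estimate \eqref{EQ:L2H1_approx} combined with the lifting bound \eqref{EQ:lift_bound} yields the factor $h_\level\|\localQ_\level\mu\|_0\le h_\level\|\mu\|_{a_\level}$. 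Your closing remark about applying the approximation estimate to $\liftingOp_\level\mu$ rather than to $\localU_\level\mu$ is exactly the point the paper's one-line chain of inequalities relies on.
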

\begin{proof}
 From \eqref{EQ:scp_def} and \eqref{EQ:lift_equiv}, we have for $\mu = \Phi_\lambda$ that
 \begin{equation}
  \| f_\lambda \|^2_0 = \| \liftingOp_\level \Phi_\lambda \|^2_0 =  \langle A_\level \lambda, \Phi_\lambda \rangle_\level \le \| A_\level \lambda \|_\level \| \Phi_\lambda \|_\level \lesssim \| A_\level \lambda \|_\level \| \liftingOp_\level \Phi_\lambda \|_0,
 \end{equation}
 which implies the second inequality.
 
 Since $a_\level (\lambda, \mu) = (f_\lambda, \liftingOp_\level \mu)$ and $a_\level (\tilde \lambda_\level, \mu) = (f_\lambda, \localU_\level \mu)_0$, we have
 \begin{equation}\label{EQ:lam_diff}
  a_\level (\lambda - \tilde \lambda_\level, \mu) = (f_\lambda, \liftingOp_\level \mu - \localU_\level \mu) \qquad \forall \mu \in \skeletalSpace_\level.
 \end{equation}
 Thus, for all $\mu \in \skeletalSpace_\level$
 \begin{equation}\label{EQ:s_appr}
  \| \liftingOp_\level \mu - \localU_\level \mu \|_0 = \| \liftingOp_\level \mu - \discProj_\level \liftingOp_\level \mu \|_0 \overset{\eqref{EQ:L2H1_approx}}\lesssim h_\level |\liftingOp_\level \mu |_{1} \overset{\eqref{EQ:lift_bound}}\lesssim h_\level \| \localQ_\level \lambda \|_0.
 \end{equation}
 Taking $\mu = \lambda - \tilde \lambda_\level$ in \eqref{EQ:lam_diff} and using \eqref{EQ:s_appr}, we have
 \begin{equation}
  \| \lambda - \tilde \lambda_\level \|^2_{a_\level} \lesssim \| f_\lambda \|_0 h_\level \| \localQ_\level (\lambda - \tilde \lambda_\level) \|_0 \le h_\level \| f_\lambda \|_0 \| \lambda - \tilde \lambda_\level \|_{a_\level},
 \end{equation}
 which yields the result after using the theorem's second inequality.
\end{proof}
\begin{lemma}[Reconstruction approximation]\label{LEM:reconstruction_approx}
 Assume that \eqref{EQ:IA1}, \eqref{EQ:IA2}, as well as \eqref{EQ:LS1}--\eqref{EQ:LS5} hold. If the model problem has elliptic regularity, then for all $\lambda \in \skeletalSpace_\level$, there exists an auxiliary function $\ureconstructed \in \linElementSpace_{\level - 1}$ such that
 \begin{equation}
  \| \localQ_\level \lambda + \nabla \ureconstructed \|_0 + \| \localQ_{\level - 1} \projectionOp_{\level - 1} \lambda + \nabla \ureconstructed \|_0 \lesssim h_\level \| A_\level \lambda \|_\level.
 \end{equation}
\end{lemma}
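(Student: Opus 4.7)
The natural strategy is to introduce the auxiliary Poisson problem \eqref{EQ:Pois_app} so that both $\lambda$ and $\projectionOp_{\level-1}\lambda$ can be viewed as HDG approximations, on levels $\level$ and $\level-1$ respectively, of a single continuous function $\tilde u \in H^1_0(\Omega) \cap H^2(\Omega)$. By elliptic regularity and the second estimate of Lemma \ref{TH:extract_h_basis}, $|\tilde u|_2 \lesssim \|f_\lambda\|_0 \lesssim \|A_\level \lambda\|_\level$. For $\ureconstructed$ I would take any standard continuous linear interpolant of $\tilde u$ into $\linElementSpace_{\level-1}$ (e.g.\ Scott--Zhang), which yields $\|\nabla(\tilde u - \ureconstructed)\|_0 \lesssim h_{\level-1}|\tilde u|_2 \lesssim h_\level \|A_\level \lambda\|_\level$ after invoking \eqref{EQ:cref}.

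Let $\tilde\lambda_j \in \skeletalSpace_j$ denote the HDG approximation of $\tilde u$ on level $j$ as defined in \eqref{EQ:tilde_lambda}. I split both target quantities by the triangle inequality:
\begin{align*}
\|\localQ_\level \lambda + \nabla \ureconstructed\|_0 &\le \|\localQ_\level(\lambda - \tilde\lambda_\level)\|_0 + \|\localQ_\level \tilde\lambda_\level + \nabla \tilde u\|_0 + \|\nabla(\tilde u - \ureconstructed)\|_0, \\
\|\localQ_{\level-1}\projectionOp_{\level-1}\lambda + \nabla \ureconstructed\|_0 &\le \|\localQ_{\level-1}(\projectionOp_{\level-1}\lambda - \tilde\lambda_{\level-1})\|_0 + \|\localQ_{\level-1}\tilde\lambda_{\level-1} + \nabla \tilde u\|_0 + \|\nabla(\tilde u - \ureconstructed)\|_0.
\end{align*}
The first summand of the top row is handled by Lemma \ref{TH:extract_h_basis} together with $\|\localQ_\level \cdot\|_0 \le \|\cdot\|_{a_\level}$. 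Each middle term is the standard $L^2$-error of the HDG flux approximation for the auxiliary Poisson problem with an $H^2$ solution $\tilde u$; under the local-solver assumptions \eqref{EQ:LS1}--\eqref{EQ:LS5} it is bounded by $h_j|\tilde u|_2$ for $j\in\{\level-1,\level\}$, and $h_{\level-1}\sim h_\level$ via \eqref{EQ:cref}. The trailing terms have already been controlled.

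The substantive work lies in the first summand of the bottom row, which demands a coarse-level analogue of Lemma \ref{TH:extract_h_basis}, namely
\begin{equation*}
\|\projectionOp_{\level-1}\lambda - \tilde\lambda_{\level-1}\|_{a_{\level-1}} \lesssim h_\level \|A_\level \lambda\|_\level.
\end{equation*}
Using the defining identities of $\projectionOp_{\level-1}$ and $\tilde\lambda_{\level-1}$ together with \eqref{EQ:scp_def}, for every $\mu \in \skeletalSpace_{\level-1}$ one finds
\begin{equation*}
a_{\level-1}(\projectionOp_{\level-1}\lambda - \tilde\lambda_{\level-1}, \mu) = (f_\lambda,\, \liftingOp_\level \injectionOp_\level \mu - \localU_{\level-1}\mu).
\end{equation*}
I would control the consistency error $\|\liftingOp_\level \injectionOp_\level \mu - \localU_{\level-1}\mu\|_0$ by inserting $\pm\,\localU_\level \injectionOp_\level \mu$: the piece $\|\liftingOp_\level \injectionOp_\level \mu - \localU_\level \injectionOp_\level \mu\|_0$ is bounded by \eqref{EQ:s_appr} combined with the first stability estimate of Lemma \ref{TH:conv_result}, while $\|\localU_\level \injectionOp_\level \mu - \localU_{\level-1}\mu\|_0$ is precisely the second bound of that same lemma; both are $\lesssim h_\level\|\localQ_{\level-1}\mu\|_0 \le h_\level\|\mu\|_{a_{\level-1}}$. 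Testing with $\mu = \projectionOp_{\level-1}\lambda - \tilde\lambda_{\level-1}$ and using $\|f_\lambda\|_0 \lesssim \|A_\level \lambda\|_\level$ from Lemma \ref{TH:extract_h_basis} closes this estimate, after which collecting the triangle inequalities yields the claim. I expect this coarse-level defect bound to be the principal obstacle, since it is precisely the point where the new abstract assumptions \eqref{EQ:IA1}/\eqref{EQ:IA2}, channelled through Lemma \ref{TH:conv_result}, must replace the wide-stencil conforming interpolation used in earlier analyses.
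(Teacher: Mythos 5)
Your proposal is correct and follows the same overall architecture as the paper's proof: auxiliary problem \eqref{EQ:Pois_app}, the discrete approximations $\tilde\lambda_\level$, $\tilde\lambda_{\level-1}$ from \eqref{EQ:tilde_lambda}, a piecewise-linear interpolant of $\tilde u$ as $\ureconstructed$, and a triangle-inequality decomposition closed by Lemma~\ref{TH:extract_h_basis} and Lemma~\ref{TH:conv_result}. The one genuine difference lies in how the coarse-level defect is organized. The paper inserts $\projectionOp_{\level-1}\tilde\lambda_\level$ as an intermediate quantity, bounding $\Xi_1=\|\localQ_{\level-1}\projectionOp_{\level-1}(\lambda-\tilde\lambda_\level)\|_0$ via the energy stability \eqref{EQ:stability-P} of $\projectionOp_{\level-1}$ and then $\Xi_2=\|\localQ_{\level-1}(\projectionOp_{\level-1}\tilde\lambda_\level-\tilde\lambda_{\level-1})\|_0$ by a duality argument with the consistency error $\localU_\level\injectionOp_\level e_{\level-1}-\localU_{\level-1}e_{\level-1}$. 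You instead estimate $\|\projectionOp_{\level-1}\lambda-\tilde\lambda_{\level-1}\|_{a_{\level-1}}$ in a single step, using \eqref{EQ:scp_def} with the test function $\injectionOp_\level\mu$ to get the residual $(f_\lambda,\liftingOp_\level\injectionOp_\level\mu-\localU_{\level-1}\mu)$ and then inserting $\pm\localU_\level\injectionOp_\level\mu$; this absorbs both $\Xi_1$ and $\Xi_2$ at once and, notably, avoids \eqref{EQ:stability-P}, which is the fourth inequality of Lemma~\ref{TH:conv_result} and hence implicitly requires $\tau_\level h_\level\lesssim 1$ --- a hypothesis the lemma statement does not list but the paper's proof quietly uses. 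Your route is therefore marginally cleaner and hypothesis-tighter. The only place you should add detail is the ``middle term'' $\|\localQ_{j}\tilde\lambda_{j}+\nabla\tilde u\|_0\lesssim h_j|\tilde u|_2$: this flux-error bound is not literally one of the axioms \eqref{EQ:LS1}--\eqref{EQ:LS5}, and must be derived exactly as the paper's $\Xi_3$, i.e.\ by inserting $\nabla$ of a linear interpolant, rewriting it via \eqref{EQ:LS4} as $-\localQ_j\gamma_j(\cdot)$, paying an $h_j^{-1}$ through \eqref{EQ:LS2}, and recovering $h_j^2|\tilde u|_2$ from \eqref{EQ:LS5}, \eqref{EQ:bdrH2_approx} and the $L^2$-approximation of the interpolant; once this chain is written out, your argument is complete.
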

\begin{proof}
 We only prove the inequality for the second term, since the first one can be treated analogously. We set $\ureconstructed = \contLinProj_{\level - 1} \tilde u$, where $\contLinProj_{\level-1}$ is the $L^2$-projection into $\linElementSpace_{\level-1}$ with the standard stability and approximation estimates
 \begin{xalignat}2
      | \contLinProj_\level u |_{1} &\lesssim  |u|_{1},  && \forall u \in H^1(\Omega),\label{EQ:L2H1_stability}\\
 \| u - \contLinProj_\level u \|_0 &\lesssim h_\level^2 |u|_2,  && \forall u \in H^2(\Omega).\label{EQ:L2H2_approx}
 \end{xalignat}
 
 and split up the term, we want to bound via
 \begin{multline}
  \| \localQ_{\level - 1} \projectionOp_{\level - 1} \lambda + \nabla \ureconstructed \|_0
  \le \underbrace{ \| \localQ_{\level - 1} \projectionOp_{\level - 1} \lambda - \localQ_{\level - 1} \projectionOp_{\level - 1} \tilde \lambda_\level \|_0 }_{ =: \Xi_1 }\\
   + \underbrace{ \| \localQ_{\level - 1} \projectionOp_{\level - 1} \tilde \lambda_\level - \localQ_{\level - 1} \tilde \lambda_{\level-1} \|_0 }_{ =: \Xi_2 }
  + \underbrace{ \| \localQ_{\level - 1} \tilde \lambda_{\level-1} + \nabla \contLinProj_{\level - 1} \tilde u \|_0 }_{ =: \Xi_3 }.
 \end{multline}
 The estimate of $\Xi_1$ follows immediately from \eqref{EQ:bound_flambda}, \eqref{EQ:stability-P} and \eqref{EQ:bilinear_condensed}. Using \eqref{EQ:projection_definition}, \eqref{EQ:tilde_lambda}, \eqref{EQ:bound_flambda}, Lemma \ref{TH:conv_result}, and setting $e_{\level-1} = \projectionOp_{\level-1} \tilde \lambda_\level - \tilde \lambda_{\level-1}$  we obtain 
 \begin{equation}
  \Xi_2^2 \le a_{\level-1}( e_{\level-1}, e_{\level-1} ) \le \| f_\lambda \|_0 \| \localU_{\level-1} e_{\level-1} - \localU_\level \injectionOp_\level e_{\level -1} \|_0 \lesssim h_\level \Xi_2\| A_\level \lambda \|_\level.
 \end{equation}
 
 Finally, we rewrite $\Xi_3$ by \eqref{EQ:LS4}, estimate it using \eqref{EQ:LS2}
 \begin{align}
  \Xi_3 = & \| \localQ_{\level - 1} \tilde \lambda_{\level-1} - \localQ_{\level - 1} \gamma_{\level - 1} \contLinProj_{\level - 1} \tilde u \|_0 \\
  \lesssim & h_{\level - 1}^{-1} \| \tilde \lambda_{\level-1} - \gamma_{\level - 1} \contLinProj_{\level - 1} \tilde u \|_{\level - 1}\notag \\
  \le~ & h_{\level - 1}^{-1} \left(\| \tilde \lambda_{\level-1} - \skeletalProj_{\level - 1} \tilde u \|_{\level - 1} + \| \skeletalProj_{\level - 1} \tilde u - \tilde u \|_{\level-1} + \| \tilde u - \contLinProj_{\level - 1} \tilde u \|_0\right), \notag
 \end{align}
 and use triangle's inequality. Next, we use \eqref{EQ:LS5} on the first summand, \eqref{EQ:bdrH2_approx} on the second summand, and \eqref{EQ:L2H2_approx} on the third summand to obtain
 \begin{align}
  \Xi_3 \lesssim & h_{\level - 1}^{-1} \left( h_{\level - 1}^2 |\tilde u|_2 \right) \lesssim h_{\level - 1} \| A_\level \lambda \|_\level \lesssim h_\level \| A_\level \lambda \|_\level \notag
 \end{align}
\end{proof}
Using the aforementioned results, we can formulate an improved version of \cite[Theo.~4.1]{LuRK2020}:
\begin{theorem}\label{TH:A1_proof}
 If \eqref{EQ:diffusion_mixed} has elliptic regularity, $\tau_\level h_\level \lesssim 1$, and $\eqref{EQ:IA1}$, $\eqref{EQ:IA2}$, and \eqref{EQ:LS1}--\eqref{EQ:LS6} hold, then \eqref{EQ:precond1} is satisfied.
\end{theorem}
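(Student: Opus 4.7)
By \eqref{EQ:LS6}, $\underline\lambda^A_\level \lesssim h_\level^{-2}$, so \eqref{EQ:precond1} reduces to $|a_\level(\lambda - \injectionOp_\level \projectionOp_{\level-1}\lambda, \lambda)| \lesssim h_\level^2\|A_\level\lambda\|_\level^2$. My plan is to split
$a_\level(\lambda - \injectionOp_\level\projectionOp_{\level-1}\lambda, \lambda) = \mathcal{G} + \mathcal{S}$
into the gradient piece $\mathcal{G} = (\localQ_\level\lambda - \localQ_\level\injectionOp_\level\projectionOp_{\level-1}\lambda, \localQ_\level\lambda)_0$ and the penalty piece $\mathcal{S}$ (the $\tau_\level$-weighted boundary terms), and to bound each by using the auxiliary function $\ureconstructed \in \linElementSpace_{\level-1}$ supplied by Lemma~\ref{LEM:reconstruction_approx}. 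The key feature of that lemma is that it positions $-\nabla\ureconstructed$ within $h_\level\|A_\level\lambda\|_\level$ in $L^2$ of both $\localQ_\level\lambda$ and $\localQ_{\level-1}\projectionOp_{\level-1}\lambda$ simultaneously, so $\nabla\ureconstructed$ can serve as a common anchor across the two levels.

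For $\mathcal G$, I would first insert $\localQ_{\level-1}\projectionOp_{\level-1}\lambda$, producing two subterms. In the subterm $(\localQ_\level\lambda - \localQ_{\level-1}\projectionOp_{\level-1}\lambda, \localQ_\level\lambda)_0$, Lemma~\ref{LEM:quasi_orth} allows the second argument to be replaced at no cost by $\localQ_\level\lambda + \nabla\ureconstructed$; Cauchy--Schwarz together with Lemma~\ref{LEM:reconstruction_approx} applied to both factors then yields $h_\level^2\|A_\level\lambda\|_\level^2$. In the remaining subterm $(\localQ_{\level-1}\projectionOp_{\level-1}\lambda - \localQ_\level\injectionOp_\level\projectionOp_{\level-1}\lambda, \localQ_\level\lambda)_0$ I would use the cross-level gradient estimate described below to control the first factor, while the scalar factor $\|\localQ_\level\lambda\|_0$ is bounded by $\|A_\level\lambda\|_\level$ via the triangle-inequality chain $\|\localQ_\level\lambda\|_0 \lesssim |\ureconstructed|_1 + h_\level\|A_\level\lambda\|_\level \lesssim |\tilde u|_1 + h_\level\|A_\level\lambda\|_\level \lesssim \|f_\lambda\|_0 + h_\level\|A_\level\lambda\|_\level \lesssim \|A_\level\lambda\|_\level$, which uses $H^1$-stability of $\contLinProj_{\level-1}$, elliptic regularity for $\tilde u$, and Lemma~\ref{TH:extract_h_basis}.

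For $\mathcal S$, applying \eqref{EQ:LS1} once to each of its two factors extracts two powers of $h_\level$. Combined with the $h_\level$-scaling built into $\langle\cdot,\cdot\rangle_\level$ and the hypothesis $\tau_\level h_\level \lesssim 1$, this reduces $|\mathcal S|$ to a constant multiple of $\|\localQ_\level(\lambda - \injectionOp_\level\projectionOp_{\level-1}\lambda)\|_0\,\|\localQ_\level\lambda\|_0$; each factor is then brought down to $h_\level\|A_\level\lambda\|_\level$ and $\|A_\level\lambda\|_\level$, respectively, by the same $\pm\nabla\ureconstructed$ insertion used for $\mathcal G$.

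The main obstacle I anticipate is the cross-level gradient bound $\|\localQ_\level\injectionOp_\level\projectionOp_{\level-1}\lambda + \nabla\ureconstructed\|_0 \lesssim h_\level\|A_\level\lambda\|_\level$, since Lemma~\ref{TH:conv_result} controls the $\localU$-difference between levels but not its $\localQ$-analogue. I expect to obtain it by adapting the proof strategy of Lemma~\ref{TH:conv_result} itself: trade each $\localQ$ for the gradient of the corresponding $\averagingOp_\level\localU$-reconstruction at the cost of one negative power of $h_\level$ via \eqref{EQ:LS2}--\eqref{EQ:LS3}, move the injection past the trace using \eqref{EQ:IA2} and \eqref{EQ:LS4}, and then close with Lemmas~\ref{LEM:avg_bound_by_localQ}--\ref{LEM:diff_bound_by_q} together with Lemma~\ref{LEM:reconstruction_approx}. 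Once that estimate is in hand, the remainder of the argument reduces to organized Cauchy--Schwarz bookkeeping.
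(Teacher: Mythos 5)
Your overall architecture is close to the paper's: the same reduction to $h_\level^2\|A_\level\lambda\|_\level^2$, the same use of quasi-orthogonality and of Lemma~\ref{LEM:reconstruction_approx} as the common anchor across levels, and the same treatment of the penalty part via \eqref{EQ:LS1} and $\tau_\level h_\level\lesssim1$. The genuine gap is exactly where you locate it, and the route you sketch for the cross-level estimate $\|\localQ_\level\injectionOp_\level\projectionOp_{\level-1}\lambda+\nabla\ureconstructed\|_0\lesssim h_\level\|A_\level\lambda\|_\level$ does not close. Mimicking Lemma~\ref{TH:conv_result} means trading $\localQ_\level$ for a skeletal norm through the inverse estimate \eqref{EQ:LS2},
\begin{equation*}
\|\localQ_\level\injectionOp_\level(\mu-\gamma_{\level-1}\ureconstructed)\|_0\lesssim h_\level^{-1}\|\mu-\gamma_{\level-1}\ureconstructed\|_{\level-1},\qquad \mu=\projectionOp_{\level-1}\lambda,
\end{equation*}
but $\|\projectionOp_{\level-1}\lambda-\gamma_{\level-1}\ureconstructed\|_{\level-1}$ contains the piece $\|\projectionOp_{\level-1}\lambda-\tilde\lambda_{\level-1}\|_{\level-1}$, which the available tools (\eqref{EQ:LS6} together with the energy bounds of Lemmas~\ref{TH:conv_result} and~\ref{TH:extract_h_basis}) control only at order $h_\level\|A_\level\lambda\|_\level$, not $h_\level^2$; after multiplying by $h_\level^{-1}$ the factor $h_\level$ is lost. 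Getting $h_\level^2$ there would require a duality argument for a purely discrete difference, which is what the whole construction is designed to avoid. (Your target estimate is in fact true, but the working route avoids the inverse estimate altogether: bound $\|\localQ_\level\nu\|_0\le\|\nu\|_{a_\level}$ directly from \eqref{EQ:bilinear_condensed}, telescope through $\injectionOp_\level\projectionOp_{\level-1}\tilde\lambda_\level$ and $\injectionOp_\level\tilde\lambda_{\level-1}$ using the energy stability of $\injectionOp_\level$ and $\projectionOp_{\level-1}$ from Lemma~\ref{TH:conv_result}, and reuse the intermediate bounds from the proof of Lemma~\ref{LEM:reconstruction_approx}.)

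The paper sidesteps the cross-level term entirely, and this is the one idea your decomposition misses: by the definition \eqref{EQ:projection_definition} of $\projectionOp_{\level-1}$,
\begin{equation*}
a_\level(\lambda-\injectionOp_\level\projectionOp_{\level-1}\lambda,\lambda)=a_\level(\lambda,\lambda)-a_{\level-1}(\projectionOp_{\level-1}\lambda,\projectionOp_{\level-1}\lambda),
\end{equation*}
so the injected function never has to be pushed through the fine-level local solver. The gradient part becomes $\|\localQ_\level\lambda\|_0^2-\|\localQ_{\level-1}\projectionOp_{\level-1}\lambda\|_0^2$, which after binomial factorization, insertion of $\pm\nabla\ureconstructed$, and quasi-orthogonality is handled by Lemma~\ref{LEM:reconstruction_approx} alone; the penalty part becomes a difference of two nonnegative squares, each bounded separately by \eqref{EQ:LS4}, \eqref{EQ:LS1}, $\tau_\level h_\level\lesssim1$, and again Lemma~\ref{LEM:reconstruction_approx}. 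Adopting this first step removes your main obstacle; the rest of your bookkeeping, including the bound $\|\localQ_\level\lambda\|_0\lesssim\|A_\level\lambda\|_\level$, then goes through.
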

\begin{proof}
 The proof uses similar arguments as \cite[Theo.~4.1]{LuRK2020}. Nevertheless, since there are a few modifications and this article is more general, we present it here as well.
 First, we note that by standard arguments, for instance~\cite[Theorem 3.6]{DuanGTZ2007}, it is sufficient to prove
 \begin{gather}
     \left| a_\level(\lambda - \injectionOp_\level \projectionOp_{\level-1} \lambda, \lambda) \right| \lesssim h^2_\level \| A_\level \lambda \|^2_\level
 \end{gather}
 By the bilinearity of $a_\level(.,.)$ and its definition in~\eqref{EQ:bilinear_condensed}, there holds
  \begin{align}
  a_\level(\lambda - \injectionOp_\level& \projectionOp_{\level-1} \lambda, \lambda) = a_\level(\lambda, \lambda) - a_{\level-1}(\projectionOp_{\level-1} \lambda, \projectionOp_{\level-1} \lambda) \notag\\
  \cong & (\localQ_\level \lambda, \localQ_\level \lambda)_0 - (\localQ_{\level-1} \projectionOp_{\level-1} \lambda, \localQ_{\level-1} \projectionOp_{\level-1} \lambda)_0 \tag{T1} \label{EQ:main_proof_T1}\\
  & + \frac{\tau_\level}{h_\level} \| \localU_\level \lambda - \lambda \|^2_\level - \frac{\tau_{\level-1}}{h_{\level-1}} \| \localU_{\level-1} \projectionOp_{\level-1} \lambda - \projectionOp_{\level-1} \lambda \|^2_{\level-1}. \tag{T2} \label{EQ:main_proof_T2}
 \end{align}
By binomial factorization, we obtain
 \begin{gather}
   \eqref{EQ:main_proof_T1}
  =  (\localQ_\level \lambda + \localQ_{\level-1} \projectionOp_{\level-1} \lambda, \localQ_\level \lambda - \localQ_{\level-1} \projectionOp_{\level-1} \lambda)_0.
 \end{gather}
Now we use quasi-orthogonality on the left to insert $\nabla w$ with $w\in \linElementSpace_{\level-1}$ and add zero on the right to obtain
\begin{gather}
   \eqref{EQ:main_proof_T1}
   =  (\localQ_\level \lambda + 2 \nabla w + \localQ_{\level-1} \projectionOp_{\level-1} \lambda, \localQ_\level \lambda \pm \nabla w - \localQ_{\level-1} \projectionOp_{\level-1} \lambda)_0.
\end{gather}
Thus, applying Bunyakovsky-Cauchy-Schwarz inequality and the reconstruction approximation lemma~\ref{LEM:reconstruction_approx} yields
\begin{gather}
   \eqref{EQ:main_proof_T1}
   \lesssim h_\level \| A_\level \lambda \|_\level
   \; h_\level \| A_\level \lambda \|_\level
    = h_\level^2 \| A_\level \lambda \|_\level^2
   .
\end{gather}

It remains to show an equivalent estimate for~\eqref{EQ:main_proof_T2}. We estimate the first term using \eqref{EQ:LS4} for the identities and \eqref{EQ:LS1} for the estimate:
\begin{align}
  \frac{\tau_\level}{h_\level} \| \localU_\level \lambda - \lambda \|^2_\level = & \frac{\tau_\level}{h_\level} \| \localU_\level (\lambda - \gamma_\level w) + (\lambda - \gamma_\level w) \|^2_\level\\
  ~\lesssim~ & \tau_\level h_\level \| \localQ_\level (\lambda - \gamma_\level w) \|^2_0 = \tau_\level h_\level \| \localQ_\level \lambda + \nabla w \|^2_0,
\end{align}
for any $w\in \linElementSpace_{\level}$. Again, we use lemma~\ref{LEM:reconstruction_approx} to estimate
\begin{gather}
    \frac{\tau_\level}{h_\level} \| \localU_\level \lambda - \lambda \|^2_\level
    \lesssim h_\level^2 \| A_\level \lambda \|_\level^2.
\end{gather}
The second term has the same structure on level $\level-1$. Exploiting the fact that $h_\level$ and $h_{\level-1}$ are similar due to~\eqref{EQ:cref}, we can use the same argument together with boundedness of $\projectionOp_{\level-1}$.
\end{proof}
%
\section{HDG methods covered by our analysis}\label{SEC:possible_methods}
%
In this section, we give three HDG methods sufficing the general assumptions on local problems:
\subsection{LDG-H methods} 
Here, the spaces are
\begin{gather}
    \discElementSpace_\elem = \polynomials_p(\elem)
    \qquad\vec W_\elem = [\polynomials_p(\elem)]^d
\end{gather}
Under the assumption $\tau_\level h_\level \lesssim 1$, the required results have been proven in the following sources: 
\begin{itemize}
 \item \eqref{EQ:LS1} is a combination of \cite[(3.10) \& (3.14)]{CockburnDGT2013}.
 \item \eqref{EQ:LS2} can be found in \cite[Theo.~3.1]{CockburnDGT2013}.
 \item \eqref{EQ:LS3} is \cite[Lem.~3.3]{ChenLX2014}.
 \item \eqref{EQ:LS4} is \cite[Lem.~3.5.iii]{CockburnDGT2013}.
 \item \eqref{EQ:LS5} is \cite[Thm.~4.1]{CockburnGS2010}.
 \item \eqref{EQ:LS6} is \cite[Thm.~4.8]{TanPhD}.
\end{itemize}
\subsection{RT-H methods}
Here, $\tau_\level \equiv 0$ and we use the standard Raviart-Thomas spaces
\begin{gather}
    \discElementSpace_\elem = \polynomials_p(\elem)
    \qquad
    \vec W_\elem = [\polynomials_p(\elem)]^d + \vec x \polynomials_p
\end{gather}
Proof of the assumptions can be found at:
\begin{itemize}
 \item \eqref{EQ:LS1} is \cite[Lem.~4.2]{TanPhD}.
 \item \eqref{EQ:LS2} is \cite[Lem.~3.3]{CockburnG2005}.
 \item \eqref{EQ:LS3} is similar to \cite[Lem.~3.3]{ChenLX2014}.
 \item \eqref{EQ:LS4} is easy to verify.
 \item \eqref{EQ:LS5} is \cite[Cor.~3.9]{CockburnG2005}.
 \item \eqref{EQ:LS6} is \cite[Thm.~2.3]{Gopa2003}.
\end{itemize}
\subsection{BDM-H methods} $\discElementSpace_\elem = \polynomials_{p-1}(\elem)$, $\vec W_\elem = [\polynomials_p(\elem)]^d$, $\tau_\level \equiv 0$\\
Here, we restrict ourselves to the cases, where $p \ge 2$:
\begin{itemize}
 \item \eqref{EQ:LS1} is similar to the RT-H case. Thus, integrating \eqref{EQ:hdg_primary} by parts on $\elem$ gives
 \begin{equation}\label{EQ:BDM_proof}
  (\localQ^\textup{BDM}_\level \lambda + \nabla \localU^\textup{BDM}_\level \lambda, \vec p_\elem)_\elem \cong h^{-1}_\level \langle \localU^\textup{BDM}_\level \lambda - \lambda, \vec p_\elem \cdot \Nu_\elem \rangle_{\partial \elem}
 \end{equation}
 According to \cite[(3.41)]{BrezziF2012}, there is a $\vec p_\elem$ such that
 \begin{gather*}
  \vec p_\elem \cdot \Nu_\elem = \localU^\textup{BDM}_\level \lambda - \lambda, \qquad (\vec p_\elem, \nabla w)_{0,\elem} = 0 \quad \forall w \in \polynomials_{p-1}(\elem),\\
  \text{ and } \quad \| \vec p_\elem \|_{0,\elem} \lesssim \| \vec p_\elem \|_{\level, \partial \elem},
 \end{gather*}
 which is obvious from the degrees of freedom of BDM and a scaling argument. With this $\vec p_\elem$ in \eqref{EQ:BDM_proof}, we obtain
 \begin{gather*}
  h^{-1}_\level \| \localU^\textup{BDM}_\level \lambda - \lambda \|^2_{\level, \partial \elem} \cong (\localQ^\textup{BDM}_\level \lambda , \vec p_\elem)_\elem \le \| \localQ^\textup{BDM}_\level \lambda \|_{0,\elem} \| \vec p_\elem \|_{0,\elem}\\
  \lesssim \| \localQ^\textup{BDM}_\level \lambda \|_{0,\elem} \| \localU^\textup{BDM}_\level \lambda - \lambda \|_{\level, \partial\elem}
 \end{gather*}
 \item \eqref{EQ:LS3} is similar to \cite[Lem.~3.3]{ChenLX2014}.
 \item \eqref{EQ:LS4} is easy to verify.
 \item \eqref{EQ:LS5} is \cite[Cor.~7.1]{CockburnG2005}.
 \item \eqref{EQ:LS2} \& \eqref{EQ:LS6} can be deduced by the comparison of RT-H and BDM-H. By \cite[Lem.~4.4]{CockburnG2004}, we have on each $\elem \in \mesh_\level$ that
 \begin{equation*}
  \localQ_\level^\textup{RT} \lambda = \localQ_\level^\textup{BDM} \lambda, \qquad \localU^\textup{BDM}_\level \lambda = \discProj_{\level,p-1} \localU^\textup{RT}_\level \lambda,
 \end{equation*}
 where $\discProj_{\level,p-1} \localU^\textup{RT}_\level \lambda \in \polynomials_{p-1}(\elem)$ satisfying
 \begin{equation*}
  (\discProj_{\level,p-1} \localU^\textup{RT}_\level \lambda, v)_0 = (\localU^\textup{RT}_\level \lambda, v) \qquad \forall v \in \polynomials_{p-1}(\elem).
 \end{equation*}
 Then, \eqref{EQ:LS2} for RT-H implies \eqref{EQ:LS2} for BDM-H and by \cite[Thm.~4.1]{CockburnG2004} the same holds for \eqref{EQ:LS6}.
\end{itemize}
%
\section{Numerical experiments}\label{SEC:numerics}
%
\begin{figure}[tp]
 \begin{tikzpicture}[scale = 2.]
  \draw (0,2) -- (0,0) -- (1,0) -- (1,1) -- (0,1) -- (1,0) -- (2,0) -- (2,2) -- (1,2) -- (1,1) -- (2,1) -- (1,2) -- (0,2) -- (2,0);
 \end{tikzpicture}
 \caption{Initial mesh for numerical experiments.}\label{FIG:mesh}
\end{figure}
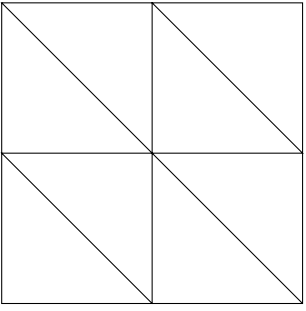
For the numerical evaluation of our multigrid method for HDG, we consider the following Poisson problem on the unit square $\Omega = [0,1]^2$:
\begin{subequations}\label{EQ:num_testcase}
\begin{align}
 -\Delta u & = 1 && \text{ in } \Omega,\\
 u & = 0 && \text{ on } \partial \Omega.
\end{align}
\end{subequations}
The first mesh is shown in Figure \ref{FIG:mesh} and it is successively refined in our experiments. The implementation is based on the FFW toolbox from \cite{BGGRW07}. It uses a Lagrange basis with equidistant support points and the Euclidean inner product in the coefficient space instead of the inner product $\langle .,. \rangle_\level$. These two inner products are equivalent up to a factor of $h^2_\level$.
Supposing that the matrix form of \eqref{EQ:num_testcase} is $A \vec x = \vec b$, we stop the iteration for solving the linear system of equations if
\begin{equation}
 \frac{ \| \vec b - A \vec x_\text{iter} \|_2 }{ \| \vec b \|_2 } < 10^{-6}.
\end{equation}
The results of this procedure are illustrated in Tables \ref{TAB:multigrid_steps_i0} -- \ref{TAB:multigrid_steps_i3}.
\begin{table}
 \begin{tabular}{cc|@{\,}lcc@{\,}lcc@{\,}lcc@{\,}lcc@{\,}lcc@{\,}lcc}
  \toprule
  \multicolumn{2}{c|@{\,}}{mesh level}  && \multicolumn{2}{c}{2}  && \multicolumn{2}{c}{3}   && \multicolumn{2}{c}{4}   && \multicolumn{2}{c}{5}    && \multicolumn{2}{c}{6}     && \multicolumn{2}{c}{7}     \\
  \cmidrule{4-5} \cmidrule{7-8} \cmidrule{10-11} \cmidrule{13-14} \cmidrule{16-17} \cmidrule{19-20}
  \multicolumn{2}{c|@{\,}}{smoother}    && 1  & 2  && 1  & 2  && 1  & 2  && 1  & 2  && 1  & 2  && 1  & 2  \\
  \midrule
  \multirow{3.5}{*}{\rotatebox[origin=c]{90}{$p = 1$}}
  & \# DoFs               && \multicolumn{2}{c}{80} && \multicolumn{2}{c}{352} && \multicolumn{2}{c}{1472} && \multicolumn{2}{c}{6016} && \multicolumn{2}{c}{24320} && \multicolumn{2}{c}{97792} \\
  \cmidrule{4-5} \cmidrule{7-8} \cmidrule{10-11} \cmidrule{13-14} \cmidrule{16-17} \cmidrule{19-20}
  & $\tau = \tfrac{1}{h}$ && 33 & 17 && 39 & 20 && 38 & 19 && 36 & 19 && 35 & 18 && 35 & 18 \\
  & $\tau = 1$            && 33 & 17 && 39 & 19 && 36 & 18 && 35 & 18 && 34 & 17 && 33 & 17 \\
  \midrule
  \multirow{3.5}{*}{\rotatebox[origin=c]{90}{$p = 2$}}
  & \# DoFs               && \multicolumn{2}{c}{120} && \multicolumn{2}{c}{528} && \multicolumn{2}{c}{2208} && \multicolumn{2}{c}{9024} && \multicolumn{2}{c}{36480} && \multicolumn{2}{c}{146688} \\
  \cmidrule{4-5} \cmidrule{7-8} \cmidrule{10-11} \cmidrule{13-14} \cmidrule{16-17} \cmidrule{19-20}
  & $\tau = \tfrac{1}{h}$ && 13 & 08 && 12 & 07 && 11 & 07 && 10 & 06 && 10 & 06 && 09 & 05 \\
  & $\tau = 1$            && 13 & 08 && 12 & 07 && 11 & 07 && 10 & 06 && 10 & 06 && 09 & 05 \\
  \midrule
  \multirow{3.5}{*}{\rotatebox[origin=c]{90}{$p = 3$}}
  & \# DoFs               && \multicolumn{2}{c}{160} && \multicolumn{2}{c}{704} && \multicolumn{2}{c}{2944} && \multicolumn{2}{c}{12032} && \multicolumn{2}{c}{48640} && \multicolumn{2}{c}{195584} \\
  \cmidrule{4-5} \cmidrule{7-8} \cmidrule{10-11} \cmidrule{13-14} \cmidrule{16-17} \cmidrule{19-20}
  & $\tau = \tfrac{1}{h}$ && 24 & 15 && 25 & 15 && 25 & 15 && 25 & 15 && 25 & 15 && 25 & 15 \\
  & $\tau = 1$            && 24 & 15 && 25 & 15 && 25 & 15 && 25 & 15 && 25 & 15 && 25 & 15 \\
  \bottomrule
 \end{tabular}\vspace{1ex}
 \caption{Numbers of iterations with one and two smoothing steps. The polynomial degree of the HDG method is $p$ and the injection operator is the one from \cite{LuRK2020}.}\label{TAB:multigrid_steps_i0}
\end{table}
\begin{table}
 \begin{tabular}{cc|@{\,}lcc@{\,}lcc@{\,}lcc@{\,}lcc@{\,}lcc@{\,}lcc}
  \toprule
  \multicolumn{2}{c|@{\,}}{mesh level}  && \multicolumn{2}{c}{2}  && \multicolumn{2}{c}{3}   && \multicolumn{2}{c}{4}   && \multicolumn{2}{c}{5}    && \multicolumn{2}{c}{6}     && \multicolumn{2}{c}{7}     \\
  \cmidrule{4-5} \cmidrule{7-8} \cmidrule{10-11} \cmidrule{13-14} \cmidrule{16-17} \cmidrule{19-20}
  \multicolumn{2}{c|@{\,}}{smoother}    && 1  & 2  && 1  & 2  && 1  & 2  && 1  & 2  && 1  & 2  && 1  & 2  \\
  \midrule
  \multirow{3.5}{*}{\rotatebox[origin=c]{90}{$p = 1$}}
  & \# DoFs               && \multicolumn{2}{c}{80} && \multicolumn{2}{c}{352} && \multicolumn{2}{c}{1472} && \multicolumn{2}{c}{6016} && \multicolumn{2}{c}{24320} && \multicolumn{2}{c}{97792} \\
  \cmidrule{4-5} \cmidrule{7-8} \cmidrule{10-11} \cmidrule{13-14} \cmidrule{16-17} \cmidrule{19-20}
  & $\tau = \tfrac{1}{h}$ && 18 & 10 && 22 & 12 && 22 & 12 && 23 & 12 && 23 & 12 && 23 & 12 \\
  & $\tau = 1$            && 18 & 10 && 21 & 12 && 22 & 12 && 22 & 12 && 22 & 12 && 23 & 12 \\
  \midrule
  \multirow{3.5}{*}{\rotatebox[origin=c]{90}{$p = 2$}}
  & \# DoFs               && \multicolumn{2}{c}{120} && \multicolumn{2}{c}{528} && \multicolumn{2}{c}{2208} && \multicolumn{2}{c}{9024} && \multicolumn{2}{c}{36480} && \multicolumn{2}{c}{146688} \\
  \cmidrule{4-5} \cmidrule{7-8} \cmidrule{10-11} \cmidrule{13-14} \cmidrule{16-17} \cmidrule{19-20}
  & $\tau = \tfrac{1}{h}$ && 13 & 08 && 13 & 07 && 12 & 07 && 12 & 07 && 12 & 07 && 12 & 07 \\
  & $\tau = 1$            && 13 & 08 && 13 & 07 && 12 & 07 && 12 & 07 && 12 & 07 && 12 & 07 \\
  \midrule
  \multirow{3.5}{*}{\rotatebox[origin=c]{90}{$p = 3$}}
  & \# DoFs               && \multicolumn{2}{c}{160} && \multicolumn{2}{c}{704} && \multicolumn{2}{c}{2944} && \multicolumn{2}{c}{12032} && \multicolumn{2}{c}{48640} && \multicolumn{2}{c}{195584} \\
  \cmidrule{4-5} \cmidrule{7-8} \cmidrule{10-11} \cmidrule{13-14} \cmidrule{16-17} \cmidrule{19-20}
  & $\tau = \tfrac{1}{h}$ && 17 & 11 && 17 & 10 && 17 & 10 && 17 & 10 && 17 & 10 && 17 & 10 \\
  & $\tau = 1$            && 17 & 11 && 17 & 10 && 17 & 10 && 17 & 10 && 17 & 10 && 17 & 10 \\
  \bottomrule
 \end{tabular}\vspace{1ex}
 \caption{Numbers of iterations with one and two smoothing steps. The polynomial degree of the HDG method is $p$ and the injection operator is $\injectionOp_\level^1$.}\label{TAB:multigrid_steps_i1}
\end{table}
\begin{table}
 \begin{tabular}{cc|@{\,}lcc@{\,}lcc@{\,}lcc@{\,}lcc@{\,}lcc@{\,}lcc}
  \toprule
  \multicolumn{2}{c|@{\,}}{mesh level}  && \multicolumn{2}{c}{2}  && \multicolumn{2}{c}{3}   && \multicolumn{2}{c}{4}   && \multicolumn{2}{c}{5}    && \multicolumn{2}{c}{6}     && \multicolumn{2}{c}{7}     \\
  \cmidrule{4-5} \cmidrule{7-8} \cmidrule{10-11} \cmidrule{13-14} \cmidrule{16-17} \cmidrule{19-20}
  \multicolumn{2}{c|@{\,}}{smoother}    && 1  & 2  && 1  & 2  && 1  & 2  && 1  & 2  && 1  & 2  && 1  & 2  \\
  \midrule
  \multirow{3.5}{*}{\rotatebox[origin=c]{90}{$p = 1$}}
  & \# DoFs               && \multicolumn{2}{c}{80} && \multicolumn{2}{c}{352} && \multicolumn{2}{c}{1472} && \multicolumn{2}{c}{6016} && \multicolumn{2}{c}{24320} && \multicolumn{2}{c}{97792} \\
  \cmidrule{4-5} \cmidrule{7-8} \cmidrule{10-11} \cmidrule{13-14} \cmidrule{16-17} \cmidrule{19-20}
  & $\tau = \tfrac{1}{h}$ && 18 & 10 && 22 & 12 && 22 & 12 && 23 & 12 && 23 & 12 && 23 & 12 \\
  & $\tau = 1$            && 18 & 10 && 21 & 12 && 22 & 12 && 22 & 12 && 22 & 12 && 23 & 12 \\
  \midrule
  \multirow{3.5}{*}{\rotatebox[origin=c]{90}{$p = 2$}}
  & \# DoFs               && \multicolumn{2}{c}{120} && \multicolumn{2}{c}{528} && \multicolumn{2}{c}{2208} && \multicolumn{2}{c}{9024} && \multicolumn{2}{c}{36480} && \multicolumn{2}{c}{146688} \\
  \cmidrule{4-5} \cmidrule{7-8} \cmidrule{10-11} \cmidrule{13-14} \cmidrule{16-17} \cmidrule{19-20}
  & $\tau = \tfrac{1}{h}$ && 11 & 08 && 11 & 07 && 11 & 07 && 11 & 07 && 11 & 07 && 11 & 07 \\
  & $\tau = 1$            && 11 & 08 && 11 & 07 && 11 & 07 && 11 & 07 && 11 & 07 && 11 & 07 \\
  \midrule
  \multirow{3.5}{*}{\rotatebox[origin=c]{90}{$p = 3$}}
  & \# DoFs               && \multicolumn{2}{c}{160} && \multicolumn{2}{c}{704} && \multicolumn{2}{c}{2944} && \multicolumn{2}{c}{12032} && \multicolumn{2}{c}{48640} && \multicolumn{2}{c}{195584} \\
  \cmidrule{4-5} \cmidrule{7-8} \cmidrule{10-11} \cmidrule{13-14} \cmidrule{16-17} \cmidrule{19-20}
  & $\tau = \tfrac{1}{h}$ && 17 & 11 && 17 & 10 && 17 & 10 && 17 & 10 && 17 & 10 && 17 & 10 \\
  & $\tau = 1$            && 17 & 11 && 17 & 10 && 17 & 10 && 17 & 10 && 17 & 10 && 17 & 10 \\
  \bottomrule
 \end{tabular}\vspace{1ex}
 \caption{Numbers of iterations with one and two smoothing steps. The polynomial degree of the HDG method is $p$ and the injection operator is $\injectionOp_\level^2$.}\label{TAB:multigrid_steps_i2}
\end{table}
\begin{table}
 \begin{tabular}{cc|@{\,}lcc@{\,}lcc@{\,}lcc@{\,}lcc@{\,}lcc@{\,}lcc}
  \toprule
  \multicolumn{2}{c|@{\,}}{mesh level}  && \multicolumn{2}{c}{2}  && \multicolumn{2}{c}{3}   && \multicolumn{2}{c}{4}   && \multicolumn{2}{c}{5}    && \multicolumn{2}{c}{6}     && \multicolumn{2}{c}{7}     \\
  \cmidrule{4-5} \cmidrule{7-8} \cmidrule{10-11} \cmidrule{13-14} \cmidrule{16-17} \cmidrule{19-20}
  \multicolumn{2}{c|@{\,}}{smoother}    && 1  & 2  && 1  & 2  && 1  & 2  && 1  & 2  && 1  & 2  && 1  & 2  \\
  \midrule
  \multirow{3.5}{*}{\rotatebox[origin=c]{90}{$p = 1$}}
  & \# DoFs               && \multicolumn{2}{c}{80} && \multicolumn{2}{c}{352} && \multicolumn{2}{c}{1472} && \multicolumn{2}{c}{6016} && \multicolumn{2}{c}{24320} && \multicolumn{2}{c}{97792} \\
  \cmidrule{4-5} \cmidrule{7-8} \cmidrule{10-11} \cmidrule{13-14} \cmidrule{16-17} \cmidrule{19-20}
  & $\tau = \tfrac{1}{h}$ && 18 & 10 && 22 & 12 && 22 & 12 && 23 & 12 && 23 & 12 && 23 & 12 \\
  & $\tau = 1$            && 18 & 10 && 21 & 12 && 22 & 12 && 22 & 12 && 22 & 12 && 23 & 12 \\
  \midrule
  \multirow{3.5}{*}{\rotatebox[origin=c]{90}{$p = 2$}}
  & \# DoFs               && \multicolumn{2}{c}{120} && \multicolumn{2}{c}{528} && \multicolumn{2}{c}{2208} && \multicolumn{2}{c}{9024} && \multicolumn{2}{c}{36480} && \multicolumn{2}{c}{146688} \\
  \cmidrule{4-5} \cmidrule{7-8} \cmidrule{10-11} \cmidrule{13-14} \cmidrule{16-17} \cmidrule{19-20}
  & $\tau = \tfrac{1}{h}$ && 13 & 08 && 13 & 07 && 12 & 07 && 12 & 07 && 12 & 07 && 12 & 07 \\
  & $\tau = 1$            && 13 & 08 && 13 & 07 && 12 & 07 && 12 & 07 && 12 & 07 && 12 & 07 \\
  \midrule
  \multirow{3.5}{*}{\rotatebox[origin=c]{90}{$p = 3$}}
  & \# DoFs               && \multicolumn{2}{c}{160} && \multicolumn{2}{c}{704} && \multicolumn{2}{c}{2944} && \multicolumn{2}{c}{12032} && \multicolumn{2}{c}{48640} && \multicolumn{2}{c}{195584} \\
  \cmidrule{4-5} \cmidrule{7-8} \cmidrule{10-11} \cmidrule{13-14} \cmidrule{16-17} \cmidrule{19-20}
  & $\tau = \tfrac{1}{h}$ && 17 & 11 && 17 & 10 && 17 & 10 && 17 & 10 && 17 & 10 && 17 & 10 \\
  & $\tau = 1$            && 17 & 11 && 17 & 10 && 17 & 10 && 17 & 10 && 17 & 10 && 17 & 10 \\
  \bottomrule
 \end{tabular}\vspace{1ex}
 \caption{Numbers of iterations with one and two smoothing steps for $f \equiv 1$. The polynomial degree of the HDG method is $p$ and the injection operator is $\injectionOp_\level^3$.}\label{TAB:multigrid_steps_i3}
\end{table}
%
\section{Conclusions}
%
In this paper, we have extended the convergence analysis of the homogeneous multigrid method for HDG \cite{LuRK2020} to more general cases. The stabilization parameter $\tau_\level$ which need to be $\tau_\level = \tfrac{c}{h_\level}$ in \cite{LuRK2020} has been generalized to $\tau_\level h_\level \lesssim 1$ without influencing the convergence of the multigrid method. Furthermore, the injection operator used in \cite{LuRK2020} can be replaced by any injection operator satisfying \eqref{EQ:IA1} and \eqref{EQ:IA2}. Moreover, the theoretical analysis also covers RT-H and BDM-H.
\bibliographystyle{siamplain}
\bibliography{MultigridInjection}

\begin{thebibliography}{10}

\bibitem{ArnoldFalkWinther06acta}
{\sc D.~N. Arnold, R.~S. Falk, and R.~Winther}, {\em Finite element exterior
  calculus, homological techniques, and applications}, Acta Numerica, 15
  (2006), pp.~1--155, \url{https://doi.org/10.1017/S0962492906210018}.

\bibitem{BrambleP1992}
{\sc J.~Bramble and J.~Pasciak}, {\em The analysis of smoothers for multigrid
  algorithms}, Mathematics of Computation, 58 (1992), pp.~467--488,
  \url{https://doi.org/10.1090/S0025-5718-1992-1122058-0}.

\bibitem{BramblePX1991}
{\sc J.~Bramble, J.~Pasciak, and J.~Xu}, {\em The analysis of multigrid
  algorithms with nonnested spaces or noninherited quadratic forms},
  Mathematics of Computation, 56 (1991), pp.~1--34,
  \url{http://www.jstor.org/stable/2008527}.

\bibitem{BrezziF2012}
{\sc F.~Brezzi and M.~Fortin}, {\em Mixed and hybrid finite element methods},
  vol.~15, Springer Science \& Business Media, 2012.

\bibitem{BGGRW07}
{\sc A.~Byfut, J.~Gedicke, D.~Günther, J.~Reininghaus, and S.~Wiedemann}, {\em
  {FFW} documentation}.
\newblock https://github.com/project-openffw/openffw.

\bibitem{ChenLX2014}
{\sc H.~Chen, P.~Lu, and X.~Xu}, {\em A robust multilevel method for
  hybridizable discontinuous {G}alerkin method for the {H}elmholtz equation},
  Journal of Computational Physics, 264 (2014), pp.~133--151,
  \url{https://doi.org/10.1016/j.jcp.2014.01.042},
  \url{http://www.sciencedirect.com/science/article/pii/S0021999114000801}.

\bibitem{CockburnDGT2013}
{\sc B.~Cockburn, O.~Dubois, J.~Gopalakrishnan, and S.~Tan}, {\em Multigrid for
  an {HDG} method}, IMA Journal of Numerical Analysis, 34 (2013),
  pp.~1386--1425, \url{https://doi.org/10.1093/imanum/drt024}.

\bibitem{CockburnG2004}
{\sc B.~Cockburn and J.~Gopalakrishnan}, {\em A characterization of hybridized
  mixed methods for second order elliptic problems}, SIAM Journal on Numerical
  Analysis, 42 (2004), pp.~283--301.

\bibitem{CockburnG2005}
{\sc B.~Cockburn and J.~Gopalakrishnan}, {\em Error analysis of variable degree
  mixed methods for elliptic problems via hybridization}, Mathematics of
  computation, 74 (2005), pp.~1653--1677.

\bibitem{CockburnGL2009}
{\sc B.~Cockburn, J.~Gopalakrishnan, and R.~Lazarov}, {\em Unified
  hybridization of discontinuous {G}alerkin, mixed, and continuous {G}alerkin
  methods for second order elliptic problems}, SIAM Journal on Numerical
  Analysis, 47 (2009), pp.~1319--1365, \url{https://doi.org/10.1137/070706616}.

\bibitem{CockburnGS2010}
{\sc B.~Cockburn, J.~Gopalakrishnan, and F.-J. Sayas}, {\em A projection-based
  error analysis of {HDG} methods}, Mathematics of Computation, 79 (2010),
  pp.~1351--1367.

\bibitem{DuanGTZ2007}
{\sc H.~Duan, S.~Gao, R.~Tan, and S.~Zhang}, {\em A generalized {BPX} multigrid
  framework covering nonnested {V}-cycle methods}, Mathematics of Computation,
  76 (2007), pp.~137--152, \url{http://www.jstor.org/stable/40234371}.

\bibitem{FabienKMR19}
{\sc M.~S. Fabien, M.~G. Knepley, R.~T. Mills, and B.~M. Rivière}, {\em
  Manycore parallel computing for a hybridizable discontinuous {G}alerkin
  nested multigrid method}, SIAM Journal on Scientific Computing, 41 (2019),
  pp.~C73--C96, \url{https://doi.org/10.1137/17M1128903},
  \url{https://doi.org/10.1137/17M1128903}.

\bibitem{GiraultR1986}
{\sc V.~Girault and P.~Raviart}, {\em Finite Element Methods for Navier-Stokes
  Equations}, Springer-Verlag, Berlin Heidelberg, 1986,
  \url{https://doi.org/10.1007/978-3-642-61623-5}.

\bibitem{Gopa2003}
{\sc J.~Gopalakrishnan}, {\em A {S}chwarz preconditioner for a hybridized mixed
  method}, Computational Methods in Applied Mathematics, 3 (2003),
  pp.~116--134, \url{https://doi.org/10.2478/cmam-2003-0009}.

\bibitem{LuRK2020}
{\sc P.~Lu, A.~Rupp, and G.~Kanschat}, {\em {HMG} --- {H}omogeneous multigrid
  for {HDG}}, 2020, \url{https://arxiv.org/abs/2011.14018}.
\newblock arXiv preprint arXiv:2011.14018.

\bibitem{LuRK2021}
{\sc P.~Lu, A.~Rupp, and G.~Kanschat}, {\em Homogeneous multigrid for embedded
  discontinuous {G}alerkin methods}, 2021,
  \url{https://arxiv.org/abs/2101.12645}.
\newblock arXiv preprint arXiv:2101.12645.

\bibitem{Monk2003}
{\sc P.~Monk}, {\em Finite Element Methods for Maxwell’s Equations}, Oxford
  University Press, New York, 2003.

\bibitem{TanPhD}
{\sc S.~Tan}, {\em Iterative solvers for hybridized finite element methods},
  PhD thesis, University of Florida, 2009,
  \url{http://etd.fcla.edu/UF/UFE0024820/tan_s.pdf}.

\end{thebibliography}
\end{document}